\renewcommand{\Pr}{\mathbf{P}}
\newtheorem{lemm}{Lemma}%%[section]
\newtheorem{theo}{Theorem}%%[section]
\newtheorem{coro}{Corollary}%%[section]
\newtheorem{prop}{Proposition}%%[section]
\newcommand{\trp}{\mbox{$^{\tt T}$}}
\begin{document}

\title{Asymptotics for a Class of Dynamic Recurrent Event Models}
%%\runtitle{Dynamic Recurrent Event Models}

\author{Edsel A.\ Pe{\~n}a\footnote{Research partially supported by NSF Grants DMS0805809 and DMS1106435 and
NIH Grants  R01CA154731, P20RR17698, and P30GM103336-01A1.}
\medskip \\ Department of Statistics \\ University of South Carolina \\
Columbia, SC 29208 USA}

\maketitle

\begin{abstract}
Asymptotic properties, both consistency and weak convergence,
of estimators arising in a general class of
dynamic recurrent event models are presented. The class of models take
into account the impact of interventions after each event occurrence,
the impact of accumulating event occurrences, the induced informative
and dependent right-censoring mechanism due to the data-accrual scheme,
and the effect of covariate processes on the recurrent event occurrences.
The class of models subsumes as special cases many of the recurrent event
models that have been considered in biostatistics, reliability, and in the
social sciences. The asymptotic properties presented have the potential of
being useful in developing goodness-of-fit and model validation procedures,
confidence intervals and confidence bands constructions, and hypothesis
testing procedures for the finite- and infinite-dimensional
parameters of a general class of dynamic recurrent event models, albeit the models
without frailties.

\medskip

\noindent
{\em Keywords and Phrases:} consistency, compensators, counting processes, full models, 
marginal models, martingales, repair models, sum-quota accrual, weak convergence.

\medskip

\noindent
{\em AMS Subject Classification:} Primary: 62N01, 62N02; Secondary: 62G10
\end{abstract}

\section{Introduction and Background}
\label{sec-intro}

Recurrent events pervade many disciplines such as the biomedical and public health
sciences, engineering sciences, social and political sciences, economic sciences,
and even sporting events. Examples of such events are non-fatal heart attacks,
hospitalization of a patient with a chronic disease, migraines, breakdown of an
electronic or mechanical system, discovery of a bug in a software, disagreement in
a marriage, change of a job, Dow Jones Industrial Average (DJIA) decreasing by at least
200 points during a trading day, a perfect baseball game in the Major Leagues,
a goal scored in a World Cup soccer game, and
many others. The mathematical modeling of recurrent events, together with the development
of statistical inference procedures for the models, are of paramount
importance.

There are two approaches to the specification of mathematical models for recurrent
events. The first is a {\em full} specification of the probability measure on the measurable
space induced by the monitoring of the recurrent event. This is done by specifying the
joint distributions of the calendar times of event occurrences, or equivalently the
joint distributions of the inter-event times. Alternatively, the probability measure
can be specified as a measure on the space of paths of the stochastic process arising
from the monitoring of the recurrent event. The simplest and perhaps most common full
parametric model is when the counting process associated with the event accrual is
assumed to follow a homogeneous Poisson process (HPP), in which case the inter-event
times are independent and identically distributed (IID) with common negative exponential
distribution. One may also specify a nonparametric model by simply assuming that the
inter-event time distribution is some unknown continuous distribution, resulting in the
IID renewal model. The general dynamic model of interest in this article is of the
full model variety.

The second modeling approach is referred to as {\em marginal} modeling.
In its basic form, the event position within a unit is utilized as a stratifying variable,
and a (marginal) probability measure is specified for each of the resulting strata.
This approach was pioneered in the papers \cite{PreWilPet81, WeiLinWei89}. It
should be observed that the class of full models subsumes the class of marginal models.
However, proponents of the marginal modeling approach espouse this marginal approach since it
generally leads to an easier interpretation of model parameters though, at the same time,
it may be difficult to justify a full model which is consistent with the specified
marginal models. In fact, there could be several full models that are consistent with
the marginal models.

An IID distributional specification for the inter-event times is clearly
an oversimplification since it will often be the case that after an event
occurrence some type of intervention, such as a corrective measure or a
repair, will be performed, thereby altering the distribution of the time to
the next event occurrence. Furthermore, time-dependent concomitant variables
could also impact the distributions of the inter-event times, and within
a unit the inter-event times may be correlated owing to unobserved
latent variables. The number of event occurrences could also impact these
distributions, such as when event occurrences weakens the unit, thereby
stochastically shortening the time to the next event occurrence.
Due to practical and unavoidable constraints, the monitoring of the event
could also only be performed over a finite, possibly random, observation
window, and thus a {\em sum-quota accrual scheme} ensues wherein the number of
observed event occurrences is a random variable which is informative about
the event occurrence mechanism. This finite monitoring constraint also
produces a right-censored observation, which could not be ignored in performing
inference because of selection bias issues. The class of dynamic recurrent
event models proposed in \cite{PenHol04} incorporates the above considerations.
This class of models is a specific member of the class of models of interest
in this article. The major goals of this article are to obtain the asymptotic
properties of semi-parametric estimators of the model parameters for the general
class of dynamic recurrent event time models of the type in \cite{PenHol04}.
Note that algorithmic issues of the semi-parametric estimators for the model in
\cite{PenHol04} were dealt with in \cite{PenSlaGon07}.

This article focuses on the large-sample properties of semiparametric estimators
for the parameters of the class of dynamic models described in section \ref{sec-models}.
These semiparametric estimators are described in section \ref{sec-estimators}.
Consistency properties of the estimators will be established in section
\ref{sec-consistency}, while weak convergence properties will be developed in
section \ref{sec-weak convergence}.
%%Issues of semiparametric efficiency will
%%be explored in section \ref{sec-efficiency}, while the adequacy of large-sample
%%approximation will be investigated in section \ref{sec-adequacy}. Some
%%applications will be presented in section \ref{sec-illustration}.
%%Concluding remarks will be presented in section \ref{sec-conclusion}.

\section{Class of Dynamic Models}
\label{sec-models}

In this section we describe the general class of dynamic models of interest.
In the sequel, $(\Omega,\mathcal{F},\Pr)$ is the basic probability space on which
all random entities are defined.
Consider a unit that is monitored over the calendar time $[0,s^*]$,
where $s^* \in (0,\infty)$ is a fixed calendar time.
We suppose that for this unit there is a $1 \times q$ vector of possibly
time-varying bounded covariates
%
%\begin{equation}
%\label{covariate process for unit}
$\mathbf{X} = \left\{
\mathbf{X}(s):\ s \in [0,s^*]\right\}.$
%\end{equation}
%
We shall denote by
%
%\begin{equation}
%\label{counting process for unit}
$\mathbf{N}^\dagger = \left\{
N^\dagger(s):\ s \in [0,s^*]\right\}$
%\end{equation}
%
the counting process such that $N^\dagger(s)$ is the number of event occurrences over
the period $[0,s]$. The at-risk process will be
%
%\begin{equation}
%\label{at-risk process for unit}
$\mathbf{Y}^\dagger = \left\{
Y^\dagger(s):\ s \in [0,s^*]\right\},$
%\end{equation}
%
so that $Y^\dagger(s)$ indicates whether the unit is still under observation, i.e.,
at-risk, at time $s$. This will usually be defined via
%
%\begin{displaymath}
$Y^\dagger(s) = I\{\tau \ge s\},$
%\end{displaymath}
%
where $I\{\cdot\}$ is the indicator function and $\tau$ is some positive-valued
random variable. We shall denote by
%
%\begin{equation}
%\label{filtration for unit}
$\mathfrak{F} = \left\{
\mathcal{F}_s:\ s \in [0,s^*]
\right\}$
%\end{equation}
%
a filtration on $(\Omega,\mathcal{F},\Pr)$ such that $\mathbf{N}^\dagger$,
$\mathbf{Y}^\dagger$, and $\mathbf{X}$ are $\mathfrak{F}$-adapted  and, in addition,
$\mathbf{Y}^\dagger$ and $\mathbf{X}$ are also $\mathfrak{F}$-predictable.

The class of dynamic models of interest postulates that for $k \in \{1,2,\ldots\}$
and with $dN^\dagger(s) \equiv N^\dagger((s+ds)-) - N^\dagger(s-)$, as $ds \downarrow 0$
and for $s \in [0,s^*)$,
\begin{eqnarray}
\lefteqn{ \Pr\{dN^\dagger(s) \ge k | \mathcal{F}_{s-}\} \nonumber} \\ & = &
\left[Y^\dagger(s) \lambda(s|\mathbf{X}(s))
I\{k=1\} + o_p(1) I\{k \ge 2\}\right] ds,\ \mbox{a.e.-$[\Pr]$},
\label{intensity for a unit}
\end{eqnarray}
where
\begin{equation}
\label{form of intensity for a unit}
\lambda(s|\mathbf{X}(s)) =
\lambda_0[\mathcal{E}(s)] \rho[s,N^\dagger(s-);\alpha]
\psi[\mathbf{X}(s) \mathbf{\beta}]
\end{equation}
and with $\mathfrak{E} = \{\mathcal{E}(s):\ s \in [0,s^*]\}$
being an $\mathfrak{F}$-predictable process with paths that are
piecewise left-continuous, nonnegative, $\mathcal{E}(s) \le s$,
and piecewise differentiable with derivative satisfying
$\mathcal{E}^\prime(s) \ge 0$; $\lambda_0(\cdot)$ is an unknown
baseline hazard rate function with cumulative hazard function
$\Lambda_0(\cdot) = \int_0^\cdot \lambda_0(s) ds
\in \mathcal{C}$; $\rho(\cdot,\cdot;\alpha)$ is a known nonnegative bounded
function over $\Re_+ \times \mathbf{Z}_+ = \{0,1,2,\ldots\}$ with
$\rho(s,0;\alpha) = 1$ and $\alpha \in \Re^q$ is an unknown $q$-dimensional
parameter; and $\psi(\cdot)$ is a known nonnegative link function on $\Re$ and
with $\beta \in \Re^p$ an unknown $p$-dimensional regression parameter.
The process $\mathfrak{E}$ is called the {\em effective} age process.
We shall assume that $\tau \sim G(\cdot)$, where $G(\cdot)$ is some distribution
function which does not involve $(\lambda_0(\cdot),\alpha,\beta)$,
hence it is considered a nuisance parameter.
The regressors $\mathbf{X}$ is a vector-valued bounded and
predictable process whose probabilistic
structure may also contain some unknown nuisance parameters. A technical condition
that we will assume (see the paper \cite{GjeRoyPenAal10}) is that the counting
process $\mathbf{N}^\dagger$ is non-explosive over $[0,s^*]$, that is,
%
%%\begin{equation}
%%\label{non-explosion condition}
$\Pr\left\{N^\dagger(s^*) < \infty\right\} = 1.$
%%\end{equation}
%
This condition necessarily imposes a constraint on the form of the function
$\rho(\cdot,\cdot;\cdot)$ and the model parameters.

The model parameter of main interest is
\begin{equation}
\label{model parameter}
\mathbf{\theta} = (\Lambda_0(\cdot),\alpha,\beta) \in \Theta \equiv \mathcal{C} \times
\Re^q \times \Re^p
\end{equation}
where $\mathcal{C}$ is some class of cumulative hazard functions on $\Re_+$,
which will typically be a nonparametric class. Thus, $\theta$ will be
a semiparametric parameter.
Defining the process $\mathbf{M}^\dagger = \{M^\dagger(s;\theta):\
s \in [0,s^*]\}$ with
%
%%\begin{equation}
%%\label{martingale for unit}
$M^\dagger(s;\theta) = N^\dagger(s) - A^\dagger(s;\theta)$
%%\end{equation}
%
and where
\begin{equation}
\label{compensator for unit}
A^\dagger(s;\theta) = \int_0^s Y^\dagger(v)
\lambda_0[\mathcal{E}(v)] \rho[v,N^\dagger(v-);\alpha]
\psi[\mathbf{X}(v) \mathbf{\beta}] dv,
\end{equation}
the model is tantamount to the condition that $\mathbf{M}^\dagger$ is
a zero-mean square-integrable $\mathfrak{F}$-martingale.
The model specified in (\ref{intensity for a unit}) and (\ref{form of intensity for a unit})
is a slightly more general version of those in \cite{PenHol04} and \cite{PenSlaGon07} since
we allow the $\rho$-function to also directly depend on $s$ aside from $N^\dagger(s-)$.
For more background about this class of models and many specific models subsumed by this class of
models, see \cite{PenHol04,PenSlaGon07}. This general
class of models includes as special cases models that have been considered in the
biostatistics and reliability settings. To mention two specific models, if
$\mathcal{E}(s) = s - S_{N^\dagger(s-)}$ with $0 = S_0 < S_1 < S_2 < \ldots$ being
the times of successive event occurrences, so $\mathcal{E}(\cdot)$ represents
the backward recurrence time function, the model coincides with resetting the
age of the unit to zero after each event occurrence,
which is referred to in the reliability literature as a perfect repair;
while if we have $\mathcal{E}(s) = s$, then we say that a minimal
repair is performed after each event occurrence. If the latter specification
is further coupled with $\rho(v,k;\alpha) = 1$, then we recover the
Andersen-Gill multiplicative intensity model \cite{AndGil82}; also the
Cox proportional hazards (PH) model \cite{Cox72} when $\psi(v) = \exp(v)$.

We consider the situation where $n$ IID copies
%
%%\begin{equation}
%%\label{observed data}
$\mathfrak{D}_n \equiv \mathfrak{D} = (\mathbf{D}_1, \mathbf{D}_2, \ldots, \mathbf{D}_n)$
%%\end{equation}
%
of the basic observable
%
%%\begin{equation}
%%\label{basic observable for one unit}
$\mathbf{D} =
(\mathbf{N}^\dagger, \mathbf{Y}^\dagger, \mathcal{E}, \mathbf{X})$
%%\end{equation}
%
are observed. We denote by $\mathcal{D}$ the sample space of $\mathbf{D}$, so
that the sample space for $\mathfrak{D}$ is $\mathcal{D}^n$.
A larger filtration on $(\Omega, \mathcal{F}, \Pr)$ is formed from
the $n$ unit filtrations according to
\begin{displaymath}
%%\label{mega filtration}
\mathfrak{F} = \bigvee_{i=1}^n \mathfrak{F}_i =
\sigma\left(\bigcup_{i=1}^n \mathfrak{F}_i\right).
\end{displaymath}
Inference on the model parameter $\theta = (\Lambda_0(\cdot),\alpha,\beta)$, or
relevant functionals of $\theta$, are to be based on the realization of $\mathfrak{D}_n$.
Properties of the inferential procedures are to be examined when $n \rightarrow \infty$.

We shall use functional notation in the sequel.
Thus, for a possibly vector-valued function $g$ defined on $\mathcal{D}$,
$\mathbf{P}g$ will represent the theoretical expectation of $g(\mathbf{D})$,
while $\mathbb{P} g \equiv \mathbb{P}_n g$ will represent the empirical expectation of
$g$ given $\mathfrak{D}_n$. That is,
$\mathbf{P} g = \int g(\mathbf{d}) \mathbf{P}(d\mathbf{d})$
%%\quad \mbox{and} \quad
and
$\mathbb{P} g = \frac{1}{n} \sum_{i=1}^n g(\mathbf{D}_i).$
The theoretical and empirical covariances of $g$ are defined, respectively, via
$\mathbf{V}g = \mathbf{P}(g - \mathbf{P}g)^{\otimes 2}$
%% \quad \mbox{and} \quad
and
$\mathbb{V}g = \mathbb{P} (g - \mathbb{P}g)^{\otimes 2},$
where, for a column vector $a$, we write $a^{\otimes 0} = 1$, $a^{\otimes 1} = a$, and
$a^{\otimes 2} = aa\trp$.

\section{Semiparametric Estimators}
\label{sec-estimators}

\subsection{Doubly-Indexed Processes}

The intensity model in (\ref{compensator for unit}) has the distinctive feature that
the baseline hazard rate $\lambda_0(\cdot)$ is evaluated at time $s$ at the effective
age $\mathcal{E}(s)$. Since of interest is to infer about $\lambda_0(\cdot)$ or
$\Lambda_0(\cdot)$, we need to de-couple $\lambda_0(\cdot)$ from $\mathcal{E}(\cdot)$.
As demonstrated in \cite{Sel88,PenStrHol01,PenSlaGon07} such de-coupling is facilitated
through the use of doubly-indexed processes.

Let $t^* \in (0,\infty)$ be fixed, and define $\mathcal{S} = [0,s^*]$
and $\mathcal{T} = [0,t^*]$. Form $\mathcal{I} = \mathcal{S} \times \mathcal{T}$.
For our purpose we define the following $\mathcal{I}$-indexed processes
associated with the $(N^\dagger, Y^\dagger, \mathcal{E}, \mathbf{X})$ processes
for one unit:
$\mathbf{Z} = \{Z(s,t): (s,t) \in \mathcal{I}\}$,
$\mathbf{N} = \{N(s,t): (s,t) \in \mathcal{I}\}$,
$\mathbf{A} = \{A(s,t;\theta): (s,t) \in \mathcal{I}\}$, and
$\mathbf{M} = \{M(s,t;\theta): (s,t) \in \mathcal{I}\}$, where
\begin{eqnarray*}
& Z(s,t) = I\{\mathcal{E}(s) \le t\}; & \\
& N(s,t) = \int_0^s Z(v,t) N^\dagger(dv); & \\
& A(s,t;\theta) = \int_0^s Z(v,t) A^\dagger(dv;\theta); & \\
& M(s,t;\theta) = N(s,t) - A(s,t;\theta) = \int_0^s Z(v,t) M^\dagger(dv;\theta). &
\end{eqnarray*}
As an interpretation, note that $N(s,t)$ is the number of occurrences of the recurrent
event over the period $[0,s]$ and for which the effective ages on these occurrences are
at most $t$. We introduce the following notation: For a finite subset $\mathbf{T} \subset
\mathcal{T}$, $N(\cdot,\mathbf{T}) \equiv (N(\cdot,t): t \in \mathbf{T})$ and similarly
for the other processes.

\begin{prop}
\label{prop-mart property}
Let $\mathbf{T} \subset \mathcal{T}$ be a finite set. Then $\{M(s,\mathbf{T};\theta):
s \in \mathcal{S}\}$ is a $|\mathbf{T}|$-dimensional zero-mean square-integrable martingale
with predictable quadratic covariation process
$$\langle M(\cdot,\mathbf{T};\theta) \rangle(s) =
\left[
\left(
A(s,\min(t_1,t_2);\theta)
\right)_{t_1,t_2\in\mathbf{T}}
\right], \ s \in \mathcal{S}.$$
Consequently,
%
%%\begin{eqnarray}
$\mathbf{P} N(s,\mathbf{T}) = \mathbf{P} A(s,\mathbf{T};\theta)$
%%\label{mean identity} \\
and
$\mathbf{V} M(s,\mathbf{T};\theta) =
\mathbf{P} \langle M(\cdot,\mathbf{T};\theta) \rangle(s).$
%%\equiv \mathbf{\Sigma}(s,\mathbf{T};\theta).
%%\label{covariance}
%%\end{eqnarray}
%
\end{prop}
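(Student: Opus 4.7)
The plan is to represent each coordinate $M(\cdot,t;\theta)$ as a stochastic integral of a bounded predictable integrand against the one-dimensional unit martingale $\mathbf{M}^\dagger$, and then invoke the standard calculus of stochastic integrals with respect to counting-process martingales. The key observation that makes the covariation formula clean is the multiplicative identity between the two indicators $Z(v,t_1)$ and $Z(v,t_2)$.

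First I would verify that for each fixed $t \in \mathcal{T}$, the process $\{Z(s,t): s \in \mathcal{S}\}$ is $\mathfrak{F}$-predictable and bounded by $1$: predictability is inherited from the assumed predictability of $\mathfrak{E}$, since $Z(s,t) = I\{\mathcal{E}(s)\le t\}$ is a Borel function of the predictable path. Conditions (\ref{intensity for a unit})--(\ref{form of intensity for a unit}) are equivalent to saying that $\mathbf{M}^\dagger$ is a zero-mean square-integrable $\mathfrak{F}$-martingale with compensator $\mathbf{A}^\dagger$, and $\mathbf{P}A^\dagger(s^*;\theta)<\infty$ by the non-explosion assumption. Hence each stochastic integral $M(s,t;\theta) = \int_0^s Z(v,t)\, M^\dagger(dv;\theta)$ is well-defined in $L^2$ and is itself a zero-mean square-integrable $\mathfrak{F}$-martingale. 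Stacking these coordinates for $t\in\mathbf{T}$ yields the $|\mathbf{T}|$-dimensional martingale claim.

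For the predictable quadratic covariation, I would use that $\mathbf{N}^\dagger$ is a counting process with absolutely continuous (hence continuous) compensator $\mathbf{A}^\dagger$, so $\langle M^\dagger(\cdot;\theta)\rangle(s) = A^\dagger(s;\theta)$. The standard covariation rule for stochastic integrals then gives
$$\langle M(\cdot,t_1;\theta),M(\cdot,t_2;\theta)\rangle(s) = \int_0^s Z(v,t_1) Z(v,t_2)\, dA^\dagger(v;\theta).$$
The crucial step is the indicator identity
$$Z(v,t_1) Z(v,t_2) = I\{\mathcal{E}(v)\le t_1\}\,I\{\mathcal{E}(v)\le t_2\} = I\{\mathcal{E}(v)\le \min(t_1,t_2)\} = Z(v,\min(t_1,t_2)),$$
which, once substituted, collapses the integral to $A(s,\min(t_1,t_2);\theta)$ by the definition of $\mathbf{A}$. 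Arranging these entries over $t_1,t_2\in\mathbf{T}$ produces the claimed matrix.

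The two consequences are then immediate: taking $\mathbf{P}$-expectations in $M(s,\mathbf{T};\theta) = N(s,\mathbf{T}) - A(s,\mathbf{T};\theta)$ and using the zero-mean property gives $\mathbf{P}N(s,\mathbf{T}) = \mathbf{P}A(s,\mathbf{T};\theta)$, while square-integrability together with the general identity $\mathbf{V}M = \mathbf{P}\langle M\rangle$ for zero-mean square-integrable martingales yields the second equality. There is no real obstacle in this argument; the only care needed is to check square-integrability of the integrand-times-compensator, which follows from the boundedness of $Z$ and the non-explosion hypothesis on $\mathbf{N}^\dagger$.
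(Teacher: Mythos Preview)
Your proposal is correct and follows essentially the same approach as the paper's own proof: represent $M(\cdot,\mathbf{T};\theta)$ as the stochastic integral of the bounded predictable indicator $Z(\cdot,\mathbf{T})$ against $M^\dagger$, invoke the indicator identity $Z(v,t_1)Z(v,t_2)=Z(v,\min(t_1,t_2))$, and appeal to standard stochastic-integration calculus. The paper's argument is simply a terser statement of exactly these ingredients.
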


\begin{proof}
Follows from the boundedness and predictability of $s \mapsto Z(s,\mathbf{T})$, the fact that
$Z(s,t_1) Z(s,t_2) = Z(s,\min(t_1,t_2))$, by stochastic
integration theory, and since
$M(s,\mathbf{T};\theta) = \int_0^s Z(v,\mathbf{T}) M^\dagger(dv;\theta)$.
\end{proof}

Let $s \in \mathcal{S}$ and denote by
\begin{displaymath}
0 \equiv S_0 < S_1 < S_2 < \ldots < S_{N^\dagger(s-)} < S_{N^\dagger(s-)+1} \equiv
\min(s,\tau)
\end{displaymath}
the $N^\dagger(s-)$ successive event occurrence times for the unit.
Define the (random) functions
$\mathcal{E}_{j}: \mathcal{S} \rightarrow \Re$ via
\begin{displaymath}
\mathcal{E}_{j}(v) = \mathcal{E}(v) I_{(S_{j-1},S_j]}(v)
\end{displaymath}
for $j = 1, 2, \ldots, N^\dagger(s-)+1.$
By condition, on $(S_{j-1},S_j)$, $\mathcal{E}_{j}(\cdot)$ is nondecreasing
and differentiable. We denote by $\mathcal{E}_{j}^{-1}(\cdot)$ its inverse
function and by $\mathcal{E}_{j}^\prime(\cdot)$ its derivative.
Define the (random) functions
$\varphi_{j}: \mathcal{S} \rightarrow \Re$ according to
\begin{displaymath}
\varphi_{j}(v;\alpha,\beta) =
\frac{\rho(v,j-1;\alpha) \psi[\mathbf{X}(v)\beta]}
{\mathcal{E}_{j}^\prime(v)}
I_{(S_{j-1},S_j]}(v),
\end{displaymath}
for $j = 1, 2, \ldots, N^\dagger(s-)+1$. Next, we define the doubly-indexed process
$\mathbf{Y} = \{Y(s,t;\alpha,\beta): (s,t) \in \mathcal{I}\}$ according to
\begin{equation}
\label{generalized at-risk process}
Y(s,t;\alpha,\beta) =
\sum_{j=1}^{N^\dagger(s-)+1}
\varphi_{j}[\mathcal{E}_{j}^{-1}(t);\alpha,\beta]
I_{(\mathcal{E}_{j}(S_{j-1}), \mathcal{E}_{j}(S_j)]}(t).
\end{equation}
This is a generalized at-risk process. The importance of these doubly-indexed
processes arise from the representation of the $\mathbf{A}$-process in
Proposition \ref{prop-rep of A process},
which de-couples the effective age process $\mathcal{E}(\cdot)$ from the baseline
hazard function $\Lambda_0(\cdot)$, and the change-of-variable identity in
Proposition \ref{prop-change of variable}. Restricted forms of these results
were used in the IID recurrent event model considered in \cite{PenStrHol00,PenStrHol01}.

\begin{prop}
\label{prop-rep of A process}
For $(s,t) \in \mathcal{I}$, $A(s,t;\theta) = \int_0^t Y(s,w;\alpha,\beta) \Lambda_0(dw).$
\end{prop}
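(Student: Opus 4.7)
The plan is to unwind the definition of $A(s,t;\theta)$, split the outer integration over the successive inter-event intervals on which the effective age is monotone and differentiable, and then change variables on each piece to match the form in the definition of $Y(s,\cdot;\alpha,\beta)$.

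Concretely, I would first substitute the formula for $A^\dagger(dv;\theta)$ to write
\[
A(s,t;\theta) = \int_0^s I\{\mathcal{E}(v)\le t\}\, Y^\dagger(v)\, \lambda_0[\mathcal{E}(v)]\, \rho[v,N^\dagger(v-);\alpha]\, \psi[\mathbf{X}(v)\beta]\, dv .
\]
I would then decompose $[0,s]$ using the jump times $0\equiv S_0<S_1<\cdots<S_{N^\dagger(s-)}<S_{N^\dagger(s-)+1}\equiv\min(s,\tau)$. On the interval $(S_{j-1},S_j]$ one has $N^\dagger(v-)=j-1$, $\mathcal{E}(v)=\mathcal{E}_j(v)$, and $Y^\dagger(v)=1$ (because $v\le S_j\le \tau$ on these intervals, including the last one by construction of $S_{N^\dagger(s-)+1}$). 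This turns the integral into a sum of integrals over the inter-event intervals.

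Next, I would apply the change of variable $w=\mathcal{E}_j(v)$ on each piece $(S_{j-1},S_j]$, which is valid since $\mathcal{E}_j$ is nondecreasing and differentiable there with derivative $\mathcal{E}_j^\prime>0$ (the assumption $\mathcal{E}^\prime\ge 0$ together with strict monotonicity on the open interval makes $\mathcal{E}_j^{-1}$ well-defined on $(\mathcal{E}_j(S_{j-1}),\mathcal{E}_j(S_j)]$). Substituting $v=\mathcal{E}_j^{-1}(w)$ and $dv=dw/\mathcal{E}_j^\prime(\mathcal{E}_j^{-1}(w))$ transforms the $j$th summand into
\[
\int_0^t I_{(\mathcal{E}_j(S_{j-1}),\mathcal{E}_j(S_j)]}(w)\, \varphi_j[\mathcal{E}_j^{-1}(w);\alpha,\beta]\, \lambda_0(w)\, dw ,
\]
where I have used the very definition of $\varphi_j$ to collapse $\rho/\mathcal{E}_j^\prime\cdot\psi$ into a single factor. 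Finally, interchanging the finite sum and the integral (permitted by Fubini since the sum is finite a.s.\ by the non-explosiveness assumption on $\mathbf{N}^\dagger$) and recognizing the resulting sum as exactly $Y(s,w;\alpha,\beta)$ from (\ref{generalized at-risk process}) yields the claim $A(s,t;\theta)=\int_0^t Y(s,w;\alpha,\beta)\,\Lambda_0(dw)$.

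The only non-routine step is the change of variables, and the main obstacle is verifying it holds pathwise on the event of interest: strict monotonicity of $\mathcal{E}_j$ on each open inter-event interval is needed for $\mathcal{E}_j^{-1}$ to exist, and one must take care at the endpoints $S_{j-1}$ and $S_j$ where $\mathcal{E}_j$ could jump (the left-continuity plus piecewise differentiability handles this, and the indicator $I_{(\mathcal{E}_j(S_{j-1}),\mathcal{E}_j(S_j)]}$ encodes the correct range). Everything else is bookkeeping.
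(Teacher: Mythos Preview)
Your proposal is correct and follows exactly the approach the paper sketches: partition $(0,s]$ into the inter-event intervals $(S_{j-1},S_j]$, perform the change of variable $w=\mathcal{E}_j(v)$ on each piece, and then recombine to recognize $Y(s,w;\alpha,\beta)$. You have simply written out in detail what the paper summarizes as ``partition \ldots\ do a variable transformation on each region; manipulate; and then simplify.''
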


\begin{proof}
Partition the region of integration $(0,s]$ into the disjoint union
$(0, s] = \cup_{j=1}^{N^\dagger(s-)+1} (S_{j-1},S_j];$
do a variable transformation on each region; manipulate; and then simplify.
\end{proof}

\begin{prop}
\label{prop-change of variable}
Let $\{H(s,t): (s,t) \in \mathcal{I}\}$ be a bounded vector-valued process such that for each
$t$, $s \mapsto H(s,t)$ is predictable. For $(s,t) \in \mathcal{I}$, we have
\begin{displaymath}
\int_0^s H(s,\mathcal{E}(v)) M(dv,t) = \int_0^t H(s,w) M(s,dw).
\end{displaymath}
\end{prop}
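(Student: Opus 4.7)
The plan is to prove the identity pathwise via a change-of-variables argument on the (signed) measures induced by the effective-age map $v \mapsto \mathcal{E}(v)$. First I would unwind definitions. Since $M(s,t;\theta) = \int_0^s Z(v,t) M^\dagger(dv;\theta)$ with $Z(v,t) = I\{\mathcal{E}(v) \le t\}$, viewing $t$ as a parameter gives $M(dv,t) = I\{\mathcal{E}(v) \le t\} M^\dagger(dv)$ as a pathwise signed measure in $v$. Substituting, the left-hand side becomes
$$\int_0^s H(s,\mathcal{E}(v))\, I\{\mathcal{E}(v) \le t\}\, M^\dagger(dv).$$

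Next I would check the identity on the generating class $H(s,w) = I_{(a,b]}(w)$ with $0 \le a < b$. The display above reduces to $\int_0^s I\{a < \mathcal{E}(v) \le b \wedge t\}\, M^\dagger(dv) = M(s,b\wedge t) - M(s,a\wedge t)$, directly from the defining formula $M(s,w) = \int_0^s I\{\mathcal{E}(v) \le w\} M^\dagger(dv)$. Symmetrically, the right-hand side evaluates to $\int_0^t I_{(a,b]}(w)\, M(s,dw) = M(s,b\wedge t) - M(s,a\wedge t)$, using that pathwise $M(s,\cdot) = N(s,\cdot) - A(s,\cdot;\theta)$ is a difference of nondecreasing functions of finite total variation (the latter guaranteed by the non-explosion assumption on $\mathbf{N}^\dagger$). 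So the identity holds on all half-open intervals, hence on indicators of Borel sets in $[0,t^*]$.

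Finally, I would extend by linearity in $H(s,\cdot)$ to simple functions, and then by a monotone-class (or dominated-convergence) argument to all bounded measurable $H$, using that both $M^\dagger$ and $M(s,\cdot)$ have finite total variation pathwise so both sides are continuous linear functionals of $H(s,\cdot)$. The predictability of $s \mapsto H(s,t)$ is not required for the pathwise identity itself; it merely ensures that the left-hand integral agrees with the stochastic-integral interpretation needed in the applications. The only genuine obstacle is the signed-measure bookkeeping, and this is handled cleanly by the $N - A$ decomposition; equivalently, one can recognize $w \mapsto M(s,w)$ as the pushforward of $M^\dagger$ restricted to $\{v \le s\}$ under $\mathcal{E}$ (intersected with $[0,t^*]$), whereupon the asserted identity is simply the standard change-of-variable formula for image measures.
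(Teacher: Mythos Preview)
Your argument is correct, but it proceeds along a different line from the paper's. The paper's proof splits $M$ into its $N^\dagger$ and $A^\dagger$ components and then, exactly as in the proof of Proposition~\ref{prop-rep of A process}, partitions $(0,s]$ into the inter-event intervals $(S_{j-1},S_j]$ on which $\mathcal{E}$ is monotone and differentiable, performing the explicit substitution $w = \mathcal{E}_j(v)$ on each piece. You instead treat $w \mapsto M(s,w)$ abstractly as the pushforward of the signed measure $M^\dagger|_{[0,s]}$ under $\mathcal{E}$, verify the identity on indicators $I_{(a,b]}$, and extend via a monotone-class argument. Your route is somewhat more general---it uses only measurability of $\mathcal{E}$ and finite variation of $M^\dagger$, not the piecewise differentiability---and sidesteps the interval-by-interval bookkeeping. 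The paper's route, by contrast, reuses the very computation that yields the explicit form of $Y(s,t;\alpha,\beta)$ in Proposition~\ref{prop-rep of A process}, so it keeps the link to the generalized at-risk process (and hence to the later estimation theory) transparent.
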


\begin{proof}
Start with the left-hand side, write the $M$ process into its $N^\dagger$ and
$A^\dagger$ components, then perform the same manipulations as in the proof of
Proposition \ref{prop-rep of A process}.
\end{proof}

\subsection{Estimation of $\Lambda_0$}

Propositions \ref{prop-mart property} and \ref{prop-rep of A process} now combine
to suggest the stochastic differential equation, for an observable $\mathbf{D}$,
\begin{displaymath}
N(s^*,dt) = Y(s^*,t;\alpha,\beta) \Lambda_0(dt) + M(s^*,dt;\theta).
\end{displaymath}
When data $\mathfrak{D}_n$ is available from $n$ units, we therefore obtain the
differential form
\begin{equation}
\label{diff form}
\mathbb{P} N(s^*,dt) =
\{\mathbb{P} Y(s^*,t;\alpha,\beta)\} \Lambda_0(dt) +
\mathbb{P} M(s^*,dt;\theta).
\end{equation}
Define
\begin{equation}
\label{S0}
S^{(0)}(s,t;\alpha,\beta) = \mathbb{P} Y(s,t;\alpha,\beta)
\equiv \frac{1}{n}\sum_{i=1}^n Y_i(s,t;\alpha,\beta)
\end{equation}
and
%
%%\begin{displaymath}
$J(s,t;\alpha,\beta) = I\{S^{(0)}(s,t;\alpha,\beta) > 0\}.$
%%\end{displaymath}
%
With the convention that $0/0=0$, we obtain from (\ref{diff form}) the
stochastic integral identity
\begin{eqnarray}
\lefteqn{
\int_0^t \frac{J(s^*,w;\alpha,\beta)}{S^{(0)}(s^*,w;\alpha,\beta)} \mathbb{P} N(s^*,dw) }
\nonumber \\ &  = &
\int_0^t J(s^*,w;\alpha,\beta) \Lambda_0(dw) +
\int_0^t \frac{J(s^*,w;\alpha,\beta)}{S^{(0)}(s^*,w;\alpha,\beta)}
\mathbb{P} M(s^*,dw;\theta).
\label{inte form}
\end{eqnarray}
Let us consider the last term in (\ref{inte form}). We have
\begin{eqnarray}
\lefteqn{ \int_0^t \frac{J(s^*,w;\alpha,\beta)}{S^{(0)}(s^*,w;\alpha,\beta)}
\mathbb{P} M(s^*,dw;\theta) } \nonumber \\
& = & \frac{1}{n} \sum_{i=1}^n \int_0^t
\frac{J(s^*,w;\alpha,\beta)}{S^{(0)}(s^*,w;\alpha,\beta)} M_i(s^*,dw;\theta) \nonumber \\
& = & \frac{1}{n} \sum_{i=1}^n \int_0^{s^*}
\frac{J(s^*,\mathcal{E}_i(v);\alpha,\beta)}
{S^{(0)}(s^*,\mathcal{E}_i(v);\alpha,\beta)} M_i(dv,t;\theta)
\label{sum of ints}
\end{eqnarray}
where the last equality is obtained by invoking Proposition \ref{prop-change of variable}.
The integrand in each summand in (\ref{sum of ints}) is bounded and predictable, so it
follows from stochastic integration theory that, for $i = 1, 2, \ldots, n$,
\begin{equation}
\label{mean zero property of ints}
\mathbf{P} \int_0^{s^*}
\frac{J(s^*,\mathcal{E}_i(v);\alpha,\beta)}
{S^{(0)}(s^*,\mathcal{E}_i(v);\alpha,\beta)} M_i(dv,t;\theta) = 0.
\end{equation}
It therefore follows from (\ref{inte form}) and (\ref{mean zero property of ints}) that
\begin{displaymath}
\mathbf{P} \int_0^t \frac{J(s^*,w;\alpha,\beta)}{S^{(0)}(s^*,w;\alpha,\beta)}
\mathbb{P} N(s^*,dw) =
\mathbf{P} \int_0^t J(s^*,w;\alpha,\beta) \Lambda_0(dw).
\end{displaymath}
Analogously to Aalen's idea \cite{Aal78}, if for the moment we assume that $(\alpha,\beta)$
is known, we may propose a method-of-moments estimator for $\Lambda_0(\cdot)$ given by
\begin{equation}
\label{GNAE1}
\tilde{\Lambda}_0(t;\alpha,\beta) =
\int_0^t \frac{J(s^*,w;\alpha,\beta)}{S^{(0)}(s^*,w;\alpha,\beta)}
\mathbb{P} N(s^*,dw) =
\int_0^t \frac{\mathbb{P} N(s^*,dw)}{S^{(0)}(s^*,w;\alpha,\beta)}.
\end{equation}
However, $(\alpha,\beta)$ is not known, hence $\tilde{\Lambda}_0$ is not an estimator.
We now therefore find an estimator for $(\alpha,\beta)$, which will then be plugged-in
(\ref{GNAE1}) to obtain a legitimate estimator of $\Lambda_0$.

\subsection{Estimator of $(\alpha,\beta)$}

For the purpose of estimating $(\alpha,\beta)$, we form a generalized likelihood process,
based on $\mathfrak{D}_n$, denoted by $\mathbf{L} = \{L(s,t;\theta): (s,t) \in \mathcal{I}\}$.
We define
\begin{displaymath}
L(s,t;\theta) = \prod_{i=1}^n \prod_{v=0}^s
\left[
A_i(dv,t;\theta)
\right]^{N_i(\Delta v,t)}
\left[
1 - A_i(dv,t;\theta)
\right]^{1-N_i(\Delta v,t)},
\end{displaymath}
with the understanding that when the product operation is over a continuous index, such as $v$
in the second product operation, then it means product-integral; see \cite{GilJoh89}.
By property of the product-integral and re-writing in an expanded form, we have that
\begin{displaymath}
%%\label{likelihood}
L(s,t;\theta) =
\left\{
\prod_{i=1}^n \prod_{v=0}^s
\left[
Z_i(v,t) A_i^\dagger(dv;\theta)
\right]^{Z_i(v,t) N_i^\dagger(\Delta v)}
\right\}
\exp\left\{-n \mathbb{P} A(s,t;\theta)\right\}.
\end{displaymath}
This likelihood process involves the functional parameter $\Lambda_0(\cdot)$, for
which we have an estimator given in (\ref{GNAE1}) if $(\alpha,\beta)$ is known.
We can therefore obtain a profile likelihood for $(\alpha,\beta)$ by replacing
the $\Lambda_0(\cdot)$ in $L(s^*,t^*;\theta)$ by the
$\tilde{\Lambda}_0(s^*,\cdot;\alpha,\beta)$ in (\ref{GNAE1}). Doing so yields a
profile likelihood function given by
\begin{equation}
\label{profile likelihood}
L_P(s^*,t^*;\alpha,\beta) =
\prod_{i=1}^n \prod_{v=0}^{s^*}
\left[
\frac{\rho(v,N_i^\dagger(v-);\alpha) \psi[\mathbf{X}_i(v)\beta]}
{S^{(0)}(s^*,\mathcal{E}_i(v);\alpha,\beta)}
\right]^{N_i(\Delta v,t^*)}.
\end{equation}
This function may also be viewed as a generalized partial likelihood function
for $(\alpha,\beta)$ being very much reminiscent of the Cox partial likelihood function;
see \cite{Cox72, Cox75, AndGil82, FleHar91, AndBorGilKei93, AalBorGje08}.
From this partial likelihood
function we obtain its maximizer as our estimator of $(\alpha,\beta)$, that is,
\begin{equation}
\label{estimator of alpha and beta}
(\hat{\alpha},\hat{\beta}) \equiv (\hat{\alpha}(s^*,t^*),\hat{\beta}(s^*,t^*)) =
\arg\max_{(\alpha,\beta) \in \Re^q \times \Re^p}
L_P(s^*,t^*;\alpha,\beta).
\end{equation}
Numerical methods, such as the Newton-Raphson algorithm, are needed to obtain the values
of $(\hat{\alpha},\hat{\beta})$, as has been done in \cite{PenSlaGon07}.

Having obtained an estimator of $(\alpha,\beta)$, now replace $(\alpha,\beta)$
in $\tilde{\Lambda}_0(s^*,t;\alpha,\beta)$ to obtain an estimator of $\Lambda_0(\cdot)$.
This resulting estimator of $\Lambda_0(\cdot)$ is given by
\begin{equation}
\label{ABN estimator}
\hat{\Lambda}_0(s^*,t) = \tilde{\Lambda}_0(s^*,t;\hat{\alpha},\hat{\beta}) =
\int_0^t \frac{\mathbb{P} N(s^*,dw)}{S^{(0)}(s^*,w;\hat{\alpha},\hat{\beta})}, \
t \in \mathcal{T}.
\end{equation}
Observe that the form of this estimator is analogous to the estimator of the baseline
hazard function in the Cox PH model \cite{Cox72, BreCro74, AndGil82},
hence it seems appropriate to refer to this as a generalized Aalen-Breslow-Nelson (ABN)
estimator.

Denoting by $F_0$ the distribution function associated with the baseline hazard function
$\Lambda_0$, then dictated by the product-integral representation of $F_0$ by $\Lambda_0$, we
are able to obtain a product-limit type estimator of the survivor function
$\bar{F}_0(t) = 1 - F_0(t)$ given by
\begin{equation}
\label{GPLE}
\hat{\bar{F}}_0(s^*,t) = \prod_{w=0}^t
\left[
1 - \frac{\mathbb{P} N(s^*,dw)}{S^{(0)}(s^*,w;\hat{\alpha},\hat{\beta})}
\right], \ t \in \mathcal{T}.
\end{equation}

Small to moderate sample size properties
of the estimators presented above were examined through
simulation studies in \cite{PenSlaGon07} for specific forms of the effective age process
$\mathcal{E}$, for a function $\rho$ which was made to depend on $s$
only through $N^\dagger(s-)$, and for an exponential link function $\psi$.
Applications of these estimators to some real data sets were
also presented in that paper. However, general asymptotic properties of these estimators are
still unavailable, and establishing the large-sample properties of these semiparametric
estimators is the {\em raison d'\^etre} of the current paper.

\section{Preliminaries for Asymptotics}
\label{sec-Preliminaries for Asymptotics}

For studying the large-sample properties of our semiparametric estimators,
it is first convenient to deal with the model where $A^\dagger$ in
(\ref{compensator for unit}) is of form
\begin{equation}
\label{simplified A dag}
A^\dagger(s;\eta) = \int_0^s Y^\dagger(v) \lambda_0[\mathcal{E}(v)] \kappa(v;\eta) dv.
\end{equation}
Here $\kappa = \{\kappa(s;\eta): s \in \mathcal{S}\}$ is a bounded and predictable process
and $\eta \in \Gamma$ with $\Gamma$ an open subset of $\Re^{k}$.
We assume that $\eta \mapsto \kappa(s;\eta)$ is twice-differentiable and we let
\begin{displaymath}
\stackrel{.}{\kappa}(s;\eta) = \nabla_\eta \kappa(s;\eta)
\quad \mbox{and} \quad
\stackrel{..}{\kappa}(s;\eta) = \nabla_{\eta\eta\trp} \kappa(s;\eta).
\end{displaymath}
Later to obtain the specific results for the model in
(\ref{compensator for unit}), we then simply identify $\eta$ with $(\alpha,\beta)$
and with
\begin{displaymath}
\kappa(s;\eta) = \rho(s,N^\dagger(s-);\alpha) \psi[\mathbf{X}(s)\beta].
\end{displaymath}
With the above simplification, for one unit monitored over $\mathcal{S} = [0,s^*]$,
we will then define
\begin{eqnarray*}
\varphi_j(v;\eta) &  = & \frac{\kappa(v;\eta)}{\mathcal{E}^\prime(v)} I_{(S_{j-1},S_j]}(v), \
j = 1,2,\ldots,N^\dagger(s^*-)+1; \\
Y(s^*,t;\eta) & = & \sum_{j=1}^{N^\dagger(s^*-)+1} \varphi_j[\mathcal{E}_j^{-1}(t);\eta]
I_{(\mathcal{E}(S_{j-1}), \mathcal{E}(S_j)]}(t),
\end{eqnarray*}
so that with $n$ units, we will then have
\begin{displaymath}
S^{(0)}(s^*,t;\eta)  =  \mathbb{P} Y(s^*,t;\eta) = \frac{1}{n}\sum_{i=1}^n Y_i(s^*,t;\eta)
\end{displaymath}
where in this last function the $\kappa$ functions may also depend on $i$.

We denote by $(\eta^0,\Lambda_0^0)$ the true parameter vector, and to simplify notation,
we suppress writing these true parameter vector in our functions if no confusion could arise.
Thus, $A_i(s^*,t) \equiv A_i(s^*,t;\eta^0,\Lambda_0^0)$, $Y_i(s^*,t) \equiv Y_i(s^*,t;\eta^0)$,
and $M_i(s^*,t) \equiv M_i(s^*,t;\eta^0,\Lambda_0^0)$.

In establishing consistency and weak convergence properties of the estimators, we will need
a general weak convergence result of processes formed as stochastic integrals of the processes
$M_i(s^*,t), i=1,2,\ldots,n$, which we recall are martingales with respect to $s^*$ but not
with respect to $t$.

Given an $n$ and an $(s^*, t, \eta)$, let us define a random discrete probability measure
$\mathbb{Q}_n(\cdot;s^*,t,\eta)$ on the (random) set $$\mathcal{K}_n(s^*) =
\{(i,j): j=1,2,\ldots,N_i^\dagger(s^*-)+1; i=1,2,\ldots,n\}$$ according to the probabilities
\begin{displaymath}
%%\label{Q-measure}
\mathbb{Q}_n({(i,j)};s^*,t,\eta) = \frac{1}{n}
\left\{
\frac{Y_i(s^*,t;\eta)}{S^{(0)}(s^*,t;\eta)}
\right\}
\left\{
\frac{\varphi_{ij}[\mathcal{E}_{ij}^{-1}(t);\eta]}{Y_i(s^*,t;\eta)}
I_{(\mathcal{E}_i(S_{ij-1}), \mathcal{E}_i(S_{ij})]}(t)
\right\}.
\end{displaymath}
For a function $g: \mathcal{K}_n(s^*) \rightarrow \Re^r$, which could be random and
also depending on $(s^*,t,\eta)$,
$$\mathbb{E}_{\mathbb{Q}_n(s^*,t,\eta)} g \equiv \mathbb{Q}_n(s^*,t,\eta)g$$
will denote its expectation with respect to the p.m.\ $\mathbb{Q}_n$
and
\begin{eqnarray*}
\mathbb{V}_{\mathbb{Q}_n(s^*,t,\eta)}g & \equiv &
\mathbb{Q}_n(s^*,t,\eta)[g - \mathbb{Q}_n(s^*,t,\eta)g]^{\otimes 2} \\ & = &
\mathbb{Q}_n(s^*,t,\eta)g^{\otimes 2} - [\mathbb{Q}_n(s^*,t,\eta)g]^{\otimes 2}
\end{eqnarray*}
will denote its variance-covariance matrix with respect to
$\mathbb{Q}_n$.

Let us also define
\begin{displaymath}
\mathbb{Q}_n(i;s^*,t,\eta) = \frac{1}{n} \left\{
\frac{Y_i(s^*,t;\eta)}{S^{(0)}(s^*,t;\eta)} \right\} =
\frac{Y_i(s^*,t;\eta)}{\sum_{l=1}^n Y_l(s^*,t;\eta)},
i=1,2,\ldots,n.
\end{displaymath}
Thus, when the function $g:  \mathcal{K}_n(s^*) \rightarrow \Re^r$
is such that $g(i,j) = g^*(i)$ for some $g^*$, then
\begin{eqnarray*}
\mathbb{Q}_n(s^*,t,\eta) g & = & \sum_{i=1}^n g^*(i)
\mathbb{Q}_n(i;s^*,t,\eta) = \sum_{i=1}^n g^*(i)
\left[\frac{Y_i(s^*,t;\eta)}{\sum_{l=1}^n Y_l(s^*,t;\eta)}\right].
%\\
%& = & \left[\frac{1}{S^{(0)}(s^*,t;\eta)}\right] \left[\frac{1}{n}
%\sum_{i=1}^n g^*(i) Y_i(s^*,t;\eta)\right].
\end{eqnarray*}
In this case, the variance-covariance matrix of $g$ with respect to
$\mathbb{Q}_n$ is also in more simplified form.

\begin{theo}
\label{theo-master theorem}
Let $\{H^{(n)}_i(s,t): (s,t) \in \mathcal{I}=[0,s^*] \times [0,t^*]\}$ for
$i=1,2,\ldots,n;$ $n=1,2,\ldots$ be a triangular array of vector processes,
and assume the following conditions:
\begin{itemize}
\item[(a)]
$\forall i$, $H_i^{(n)}$ is bounded and $\forall v \in [0,s]$, $H_i^{(n)}(s,\mathcal{E}_i(v))$
is $\mathbb{F}$-predictable;
\item[(b)]
There exists a deterministic function $s^{(0)}: \mathcal{I} \rightarrow \Re_+$ such that
$$|S^{(0)}(s^*,t) -  s^{(0)}(s^*,t)|
\stackrel{up}{\longrightarrow} 0$$ and $\inf_{t \in \mathcal{T}} s^{(0)}(s^*,t) > 0$; and
\item[(c)]
There exists a deterministic matrix function
$\mathbf{v}: \mathcal{I} \rightarrow \Re_+$ such that
$$\| \mathbb{Q}_n(s^*,w)\{[H^{(n)}(s^*,w)]^{\otimes 2}\} - \mathbf{v}(s^*,w) \|
\stackrel{up}{\longrightarrow} 0,$$
and for every $t \in (0,t^*]$,
\begin{displaymath}
\Sigma(s^*,t) = \int_0^t \mathbf{v}(s^*,w) s^{(0)}(s^*,w) \Lambda_0^{0}(dw)
\end{displaymath}
is positive definite.
\end{itemize}
Defining the stochastic integrals, for $n=1,2,\ldots$,
$$W^{(n)}(s^*,t) = \frac{1}{\sqrt{n}} \sum_{i=1}^n
\int_0^t H_i^{(n)}(s^*,w) M_i(s^*,dw),$$
then $\{W^{(n)}(s^*,t): t \in \mathcal{T}\}$ converges weakly on Skorohod's space
$D[0,t^*]$ to a zero-mean Gaussian process $\{W^{(\infty)}(s^*,t): t \in \mathcal{T}\}$
whose covariance function is
\begin{displaymath}
\mbox{Cov}\{W^{(\infty)}(s^*,t_1),W^{(\infty)}(s^*,t_2)\} =
\Sigma(s^*,\min(t_1,t_2)).
\end{displaymath}
\end{theo}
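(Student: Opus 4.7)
The central obstacle is that while $s \mapsto M_i(s,t)$ is a martingale for each fixed $t$ (Proposition~\ref{prop-mart property}), the process $t \mapsto M_i(s^*,t)$ is not, so Rebolledo's CLT cannot be invoked directly with $t$ as the martingale index. My strategy is to exploit Proposition~\ref{prop-change of variable} to re-express $W^{(n)}(s^*,t)$ as a stochastic integral against the unit-level martingale $M_i^\dagger$ with $v \in [0,s^*]$ as the martingale variable and $t$ entering only through a bounded predictable indicator:
\begin{equation*}
W^{(n)}(s^*,t) = \frac{1}{\sqrt{n}} \sum_{i=1}^n \int_0^{s^*} H_i^{(n)}(s^*, \mathcal{E}_i(v)) I\{\mathcal{E}_i(v) \le t\} M_i^\dagger(dv).
\end{equation*}
For any finite $\mathbf{T} = \{t_1,\ldots,t_k\} \subset \mathcal{T}$, allowing the upper limit to range in $s \in [0,s^*]$ produces a $k$-dimensional square-integrable local martingale w.r.t.\ the pooled filtration $\mathfrak{F} = \bigvee_{i=1}^n \mathfrak{F}_i$ by hypothesis~(a) and predictability of $\mathcal{E}_i$.

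For the finite-dimensional distributions I compute the predictable covariation at $s = s^*$ between components $t_p$ and $t_q$: independence of the $M_i^\dagger$'s across $i$ kills the cross terms, and $I\{\mathcal{E}_i(v)\le t_p\}I\{\mathcal{E}_i(v)\le t_q\} = I\{\mathcal{E}_i(v)\le t_p \wedge t_q\}$, so it equals
\begin{equation*}
\frac{1}{n}\sum_{i=1}^n \int_0^{s^*} [H_i^{(n)}(s^*, \mathcal{E}_i(v))]^{\otimes 2} I\{\mathcal{E}_i(v)\le t_p \wedge t_q\} Y_i^\dagger(v)\lambda_0^0[\mathcal{E}_i(v)]\kappa_i(v;\eta^0)\,dv.
\end{equation*}
The piecewise change of variable $w = \mathcal{E}_i(v)$ used in the proof of Proposition~\ref{prop-rep of A process} recasts this as
\begin{equation*}
\int_0^{t_p \wedge t_q} S^{(0)}(s^*,w;\eta^0)\,\mathbb{Q}_n(s^*,w)\{[H^{(n)}(s^*,w)]^{\otimes 2}\}\,\Lambda_0^0(dw).
\end{equation*}
Hypotheses (b) and (c) deliver uniform-in-probability convergence of the integrand to $s^{(0)}(s^*,w)\mathbf{v}(s^*,w)$, and a dominated-convergence argument (legitimate by boundedness of $H^{(n)}$ and continuity of $\Lambda_0^0 \in \mathcal{C}$) yields convergence of the predictable covariation to $\Sigma(s^*,t_p \wedge t_q)$. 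The Lindeberg--Rebolledo jump condition is immediate because the maximum jump size is $O(1/\sqrt{n})$ by boundedness of $H^{(n)}$. Rebolledo's multivariate martingale CLT then gives weak convergence of the $k$-dimensional $s$-martingale on $D[0,s^*]^k$; specializing to $s = s^*$ yields the joint Gaussian limit with the claimed covariance, and hence finite-dimensional convergence of $W^{(n)}(s^*,\cdot)$ in $t$.

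The step I expect to be the main technical obstacle is tightness of $\{W^{(n)}(s^*,\cdot)\}$ on $D[0,t^*]$, since the martingale structure lies in $s$ rather than $t$. The plan is to apply Lenglart's inequality to the $s$-martingale $s \mapsto W^{(n)}(s,t_2) - W^{(n)}(s,t_1)$, whose value at $s = s^*$ is the $t$-increment of interest. Its predictable variation at $s^*$ reduces, via the same change of variable as above, to $\int_{t_1}^{t_2} S^{(0)}(s^*,w;\eta^0)\,\mathbb{Q}_n(s^*,w)\{[H^{(n)}(s^*,w)]^{\otimes 2}\}\,\Lambda_0^0(dw)$, which is uniformly small in $|t_2 - t_1|$ by (b), (c), and continuity of $\Lambda_0^0$. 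Combined with the $O(1/\sqrt{n})$ jump bound and a chaining (or Aldous-type) argument, Lenglart's bound produces the modulus estimate $\sup_{|t_2 - t_1| < \delta}|W^{(n)}(s^*,t_2) - W^{(n)}(s^*,t_1)| = o_p(1)$ as $\delta \to 0$, giving tightness; the continuity of the Gaussian limit's sample paths is consistent with this control. The finite-dimensional convergence then identifies the limit, completing the proof.
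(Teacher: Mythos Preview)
Your approach is correct and is precisely the strategy that the paper has in mind: the paper's own proof is a single sentence deferring to the general theorem in \cite{PenStrHol00}, whose argument proceeds exactly as you outline---pull back via the change-of-variable identity (Proposition~\ref{prop-change of variable}) to convert the $t$-indexed integral into an $s$-indexed martingale integral against $M_i^\dagger$, verify the predictable covariation and Lindeberg conditions for Rebolledo's CLT to get finite-dimensional convergence, and then handle tightness in $t$ by controlling increments through Lenglart's inequality applied to the $s$-martingale $s\mapsto W^{(n)}(s,t_2)-W^{(n)}(s,t_1)$. Your identification of tightness as the delicate step, and your use of the piecewise change of variable to express the predictable variation of the increment as $\int_{t_1}^{t_2} S^{(0)}(s^*,w)\,\mathbb{Q}_n(s^*,w)\{[H^{(n)}(s^*,w)]^{\otimes 2}\}\,\Lambda_0^0(dw)$, matches the structure of the argument in that reference.
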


\begin{proof}
The proof of this result is analogous to the proof of the general theorem in
\cite{PenStrHol00}.
\end{proof}

\section{Consistency Properties}
\label{sec-consistency}

In this section we will establish the consistency of the sequence of estimators
$\hat{\eta}_n$ and $\hat{\Lambda}_n(s^*,\cdot)$ as the number of units $n$
increases to infinity.

We shall assume the following set of ``regularity conditions.''

\begin{enumerate}

\item[(C1)]
For each $(s,t) \in \mathcal{I}$,
$\eta \mapsto \kappa(s,t;\eta)$ is twice-continuously differentiable with
$$\stackrel{.}{\kappa}(s,t;\eta) = \nabla_\eta \kappa(s,t;\eta)
\quad \mbox{and} \quad
\stackrel{..}{\kappa}(s,t;\eta) = \nabla_{\eta\eta\trp} \kappa(s,t;\eta).$$
Furthermore, the operations of differentiation (with respect to $\eta$) and integration
could be interchanged.

\item[(C2)]
There exists a deterministic function $s^{(0)}: \mathcal{I} \times \Gamma \rightarrow
\Re_+$ such that
\begin{displaymath}
\sup_{t \in \mathcal{T}; \eta \in \Gamma}
|S^{(0)}(s^*,t;\eta) - s^{(0)}(s^*,t;\eta)|
\stackrel{p}{\longrightarrow} 0,
\end{displaymath}
and with $\inf_{t \in \mathcal{T}} s^{(0)}(s^*,t;\eta) > 0$ and with $\Lambda_0^0(t^*)
= \int_0^{t^*} \lambda_0^0(w) dw < \infty$.

\item[(C3)]
There exist deterministic functions $s^{(1)}: \mathcal{I} \times \Gamma^2 \rightarrow
\Re^k$ and $s^{(2)}: \mathcal{I} \times \Gamma^2 \rightarrow (\Re^k)^{\otimes 2}$ such
that with
\begin{eqnarray*}
Q_n^{(1)}(s^*,t;\eta_1,\eta_2) & = &
\mathbb{Q}_n(s^*,t;\eta_1)\left[\frac{\stackrel{.}{\kappa}}{\kappa}
(\mathcal{E}^{-1}(t);\eta_2)\right]; \\
Q_n^{(2)}(s^*,t;\eta_1,\eta_2) & = &
\mathbb{Q}_n(s^*,t;\eta_1)\left[\frac{\stackrel{..}{\kappa}}{\kappa}
(\mathcal{E}^{-1}(t);\eta_2)\right],
\end{eqnarray*}
and
\begin{eqnarray*}
q^{(1)}(s^*,t;\eta_1,\eta_2) & = & \frac{s^{(1)}}{s^{(0)}}(s^*,t;\eta_1,\eta_2); \\
q^{(2)}(s^*,t;\eta_1,\eta_2) & = & \frac{s^{(2)}}{s^{(0)}}(s^*,t;\eta_1,\eta_2),
\end{eqnarray*}
we have
\begin{eqnarray*}
& \sup_{t \in \mathcal{T}; (\eta_1,\eta_2) \in \Gamma^2}
\left\|
Q_n^{(1)}(s^*,t;\eta_1,\eta_2) - q^{(1)}(s^*,t;\eta_1,\eta_2)
\right\|  \stackrel{p}{\longrightarrow}  0; & \\
& \sup_{t \in \mathcal{T}; (\eta_1,\eta_2) \in \Gamma^2}
\left\|
Q_n^{(2)}(s^*,t;\eta_1,\eta_2) -
q^{(2)}(s^*,t;\eta_1,\eta_2)
\right\|  \stackrel{p}{\longrightarrow}  0. &
\end{eqnarray*}

\item[(C4)]
With
$\mathbf{v}(s^*,t)$ satisfying
\begin{displaymath}
\sup_{t \in \mathcal{T}} \left\| \mathbb{V}_{\mathbb{Q}_n(s^*,t)}
\left[\frac{\stackrel{.}{\kappa}}{\kappa}
\left(\mathcal{E}^{-1}(t)\right)\right] -
\mathbf{v}(s^*,t)\right\| \stackrel{pr}{\longrightarrow} 0,
\end{displaymath}
the matrix
\begin{displaymath}
\mathbf{\Sigma}(s^*,t)  =
\int_0^t \mathbf{v}(s^*,w) s^{(0)}(s^*,w) \Lambda_0^0(dw)
\end{displaymath}
is positive definite for each $t \in (0,t^*]$.
\item[(C5)]
For each $s \in [0,s^*]$, the mappings
\begin{eqnarray*}
(v,\eta) & \mapsto &
\frac{\stackrel{.}{\kappa}}{\kappa}(v;\eta) - Q_n^{(1)}(s,\mathcal{E}(v);\eta,\eta); \\
(v,\eta) & \mapsto &
\frac{\stackrel{..}{\kappa}}{\kappa}(v;\eta) - Q_n^{(2)}(s,\mathcal{E}(v);\eta,\eta),
\end{eqnarray*}
are bounded and $\mathfrak{F}_{s-}$-measurable for each $v \in [0,s]$.
\end{enumerate}

We first establish an intermediate result.

\begin{lemm}
\label{lemma-s0}
For $w \in \mathcal{T}$ and $\eta \in \Gamma$, we have
\begin{eqnarray*}
\frac{\stackrel{.}{S}^{(0)}}{S^{(0)}}(s^*,w;\eta) & = & Q_n^{(1)}(s^*,w;\eta,\eta); \\
\frac{\stackrel{..}{S}^{(0)}}{S^{(0)}}(s^*,w;\eta) & = & Q_n^{(2)}(s^*,w;\eta,\eta).
\end{eqnarray*}
\end{lemm}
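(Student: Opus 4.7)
The plan is to prove both identities by directly differentiating the sum that defines $S^{(0)}$, exploiting the fact that the parameter $\eta$ enters each summand only through the factor $\kappa$. Recall that $\varphi_{ij}(v;\eta) = \kappa_i(v;\eta)/\mathcal{E}_i^\prime(v) \cdot I_{(S_{i,j-1}, S_{ij}]}(v)$, so the derivative $\mathcal{E}_i^\prime$ and the support indicator do not depend on $\eta$. Consequently, on the set where $\kappa_i$ is nonzero,
$$\nabla_\eta \varphi_{ij}(v;\eta) = \varphi_{ij}(v;\eta) \cdot \frac{\stackrel{.}{\kappa}_i}{\kappa_i}(v;\eta), \qquad \nabla_{\eta\eta\trp} \varphi_{ij}(v;\eta) = \varphi_{ij}(v;\eta) \cdot \frac{\stackrel{..}{\kappa}_i}{\kappa_i}(v;\eta),$$
while on the negligible set where $\kappa_i$ vanishes, $\varphi_{ij}$ and its derivatives all vanish and the convention $0/0 = 0$ keeps everything consistent.

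Next, I would expand the definition
$$S^{(0)}(s^*,t;\eta) = \frac{1}{n} \sum_{i=1}^n \sum_{j=1}^{N_i^\dagger(s^*-)+1} \varphi_{ij}[\mathcal{E}_{ij}^{-1}(t);\eta]\, I_{(\mathcal{E}_i(S_{i,j-1}), \mathcal{E}_i(S_{ij})]}(t),$$
and use condition (C1) to justify differentiating termwise through the finite sum. The identities in the previous paragraph then yield
$$\stackrel{.}{S}^{(0)}(s^*,t;\eta) = \frac{1}{n} \sum_{i,j} \varphi_{ij}[\mathcal{E}_{ij}^{-1}(t);\eta]\, \frac{\stackrel{.}{\kappa}_i}{\kappa_i}(\mathcal{E}_{ij}^{-1}(t);\eta)\, I_{(\mathcal{E}_i(S_{i,j-1}), \mathcal{E}_i(S_{ij})]}(t),$$
together with the analogous expression for $\stackrel{..}{S}^{(0)}$ in which $\stackrel{.}{\kappa}_i/\kappa_i$ is replaced by $\stackrel{..}{\kappa}_i/\kappa_i$.

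Finally, dividing each expression by $S^{(0)}(s^*,t;\eta)$, I would observe that the weight
$$\frac{\varphi_{ij}[\mathcal{E}_{ij}^{-1}(t);\eta]\, I_{(\mathcal{E}_i(S_{i,j-1}), \mathcal{E}_i(S_{ij})]}(t)}{n\, S^{(0)}(s^*,t;\eta)}$$
is exactly $\mathbb{Q}_n(\{(i,j)\}; s^*, t, \eta)$, obtained from the definition of $\mathbb{Q}_n$ after cancelling the $Y_i(s^*,t;\eta)$ factor in numerator and denominator. The resulting weighted sums are therefore the $\mathbb{Q}_n(s^*,t;\eta)$-expectations of the functions $(i,j) \mapsto (\stackrel{.}{\kappa}_i/\kappa_i)(\mathcal{E}_{ij}^{-1}(t);\eta)$ and $(i,j) \mapsto (\stackrel{..}{\kappa}_i/\kappa_i)(\mathcal{E}_{ij}^{-1}(t);\eta)$, which by definition equal $Q_n^{(1)}(s^*,t;\eta,\eta)$ and $Q_n^{(2)}(s^*,t;\eta,\eta)$, respectively. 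The main obstacle here is really nothing more than careful bookkeeping of subscripts and indicator supports; the identities amount to an algebraic reorganization of the defining sums combined with the explicit form of the random measure $\mathbb{Q}_n$.
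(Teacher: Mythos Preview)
Your proposal is correct and is precisely the straightforward computation the paper alludes to when it writes ``The proofs are straightforward and hence omitted.'' You have simply supplied the details: differentiate $S^{(0)}$ termwise (legitimate since $\eta$ enters each summand only through $\kappa$), divide by $S^{(0)}$, and identify the resulting weights with the probabilities $\mathbb{Q}_n(\{(i,j)\};s^*,t,\eta)$ after cancelling the $Y_i$ factors.
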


\begin{proof}
The proofs are straightforward and hence omitted.
\end{proof}

For notational brevity, let us define
\begin{eqnarray*}
& \Psi_n(s^*,t^*;\eta)  =
\nabla_\eta \left\{\frac{1}{n} {l}_P(s^*,t^*;\eta)\right\}; & \\
& \Psi(s^*,t^*;\eta)  =  \int_0^{t^*}
\left[
q^{(1)}(s^*,w;\eta^0,\eta) - q^{(1)}(s^*,w;\eta,\eta)
\right]
s^{(0)}(s^*,w) \Lambda_0^0(dw), &
\end{eqnarray*}
where
%
%%\begin{displaymath}
${l}_P(s^*,t^*;\eta) = \log L_P(s^*,t^*;\eta)$
%%\end{displaymath}
%
is the logarithm of the partial likelihood function.
We are now in position to state a result concerning the consistency of the
partial MLE of $\eta$. Without loss of generality,
we shall assume that the maximizer of the partial
likelihood can be obtained as a zero of $\eta \mapsto \Psi_n(s^*,t^*;\eta)$.

\begin{theo}
If $\hat{\eta}_n$ is such that $\Psi_n(s^*,t^*;\hat{\eta}_n) = 0$ and if,
for every $\epsilon > 0$, we have that
\begin{displaymath}
\inf_{\{\eta: ||\eta - \eta^0|| \ge \epsilon\}}
||\Psi(s^*,t^*;\eta)|| > 0,
%%= ||\Psi(s^*,t^*;\eta^0)||,
\end{displaymath}
then, under the regularity conditions (C1)--(C5),
$\hat{\eta}_n \stackrel{p}{\longrightarrow} \eta^0$.
\end{theo}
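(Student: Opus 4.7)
The plan is to prove consistency via the classical extremum-estimator argument: establish that $\Psi_n(s^*,t^*;\cdot)$ converges to $\Psi(s^*,t^*;\cdot)$ uniformly in $\eta$ in probability, then combine this with $\Psi_n(s^*,t^*;\hat\eta_n)=0$ and the identifiability hypothesis to pin $\hat\eta_n$ down to a shrinking neighborhood of $\eta^0$.

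First I would differentiate $l_P/n$ in $\eta$ and apply Lemma \ref{lemma-s0} to rewrite $\dot S^{(0)}/S^{(0)}$ as $Q_n^{(1)}$, obtaining
\[
\Psi_n(s^*,t^*;\eta)=\frac{1}{n}\sum_{i=1}^n\int_0^{s^*}\left[\frac{\dot\kappa_i}{\kappa_i}(v;\eta)-Q_n^{(1)}(s^*,\mathcal{E}_i(v);\eta,\eta)\right]Z_i(v,t^*)\,N_i^\dagger(dv).
\]
Next I would split $N_i^\dagger=M_i^\dagger+A_i^\dagger$ under $(\eta^0,\Lambda_0^0)$ to write $\Psi_n=\Psi_n^M+\Psi_n^A$. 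For the compensator piece, substituting $A_i^\dagger(dv)=Y_i^\dagger(v)\lambda_0^0[\mathcal{E}_i(v)]\kappa_i(v;\eta^0)\,dv$ and applying the change-of-variable $w=\mathcal{E}_i(v)$ (in the spirit of Proposition \ref{prop-rep of A process} and Proposition \ref{prop-change of variable}) converts $\Psi_n^A$ into
\[
\int_0^{t^*}\!\left[Q_n^{(1)}(s^*,w;\eta^0,\eta)-Q_n^{(1)}(s^*,w;\eta,\eta)\right]S^{(0)}(s^*,w;\eta^0)\,\Lambda_0^0(dw),
\]
after recognizing $\frac{1}{n}\sum_i Y_i(s^*,w;\eta^0)[\,\cdot\,]=S^{(0)}(s^*,w;\eta^0)\,\mathbb{Q}_n(s^*,w;\eta^0)[\,\cdot\,]$ and using the definition of $Q_n^{(1)}$. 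By (C2) and (C3) this converges in probability, uniformly in $\eta$, to $\Psi(s^*,t^*;\eta)$.

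For the martingale piece $\Psi_n^M(\eta)$, the integrand against $M_i^\dagger(dv)$ is, thanks to (C5), bounded and $\mathfrak{F}_{v-}$-predictable, and multiplying by $Z_i(v,t^*)$ keeps it so. Therefore $\sqrt{n}\,\Psi_n^M(\eta)$ fits the framework of Theorem \ref{theo-master theorem} (with $t=t^*$), yielding $\Psi_n^M(\eta)=O_p(n^{-1/2})$ pointwise in $\eta$. Lenglart's inequality applied to each coordinate gives the same conclusion more directly. To upgrade this to uniformity over $\eta$, I would invoke (C1)'s twice-continuous differentiability together with the boundedness in (C5) to get a $O_p(1)$ Lipschitz constant for $\eta\mapsto\Psi_n^M(\eta)$ on compact subsets, then combine pointwise convergence on a finite $\varepsilon$-net with equicontinuity; restricting attention to a compact neighborhood of $\eta^0$ is harmless since the identifiability hypothesis already isolates $\eta^0$.

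Combining the two pieces gives $\sup_\eta\|\Psi_n(s^*,t^*;\eta)-\Psi(s^*,t^*;\eta)\|\stackrel{p}{\to}0$. Since $\Psi_n(s^*,t^*;\hat\eta_n)=0$,
\[
\|\Psi(s^*,t^*;\hat\eta_n)\|=\|\Psi(s^*,t^*;\hat\eta_n)-\Psi_n(s^*,t^*;\hat\eta_n)\|\le\sup_\eta\|\Psi_n-\Psi\|\stackrel{p}{\to}0.
\]
By the identifiability condition, for each $\epsilon>0$ there exists $\delta_\epsilon>0$ with $\|\eta-\eta^0\|\ge\epsilon\Rightarrow\|\Psi(s^*,t^*;\eta)\|\ge\delta_\epsilon$, so $\Pr\{\|\hat\eta_n-\eta^0\|\ge\epsilon\}\le\Pr\{\|\Psi(s^*,t^*;\hat\eta_n)\|\ge\delta_\epsilon\}\to 0$.

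The main obstacle I anticipate is the uniform-in-$\eta$ control of the martingale part $\Psi_n^M$: the pointwise martingale CLT is immediate from Theorem \ref{theo-master theorem}, but promoting pointwise convergence to uniform convergence over $\Gamma=\Re^k$ requires either a stochastic equicontinuity argument based on (C1) and (C5), or an \emph{a priori} reduction to a compact set (justified by the identifiability hypothesis). Everything else—the compensator computation, the change of variables, and the final Z-estimator deduction—is structural and follows routinely from the propositions already proved.
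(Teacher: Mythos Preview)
Your proposal is correct and follows essentially the same route as the paper: split $\Psi_n$ into martingale and compensator pieces, handle the compensator via the change of variables and (C2)--(C3) to get uniform convergence to $\Psi$, handle the martingale via (C5) and Theorem~\ref{theo-master theorem}, and finish with the Z-estimator identifiability argument (the paper cites van der Vaart's Theorem~5.9; you write it out explicitly). You are in fact more careful than the paper on one point: the paper simply asserts the martingale term is $o_p(1)$ from (C5) and Theorem~\ref{theo-master theorem} without discussing uniformity in $\eta$, whereas you correctly flag the need for an equicontinuity/compactness argument to promote the pointwise $O_p(n^{-1/2})$ bound to a uniform one.
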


\begin{proof}
From (\ref{profile likelihood}), (C1), and Lemma \ref{lemma-s0}, we have
\begin{eqnarray}
\Psi_n(s^*,t^*;\eta) & = & \mathbb{P}_n \int_0^{s^*}
\left[
\frac{\stackrel{.}{\kappa}}{\kappa}(v;\eta) - Q_n^{(1)}(s^*,\mathcal{E}(v);\eta,\eta)
\right]
N(dv,t^*) \nonumber \\
& = & \mathbb{P}_n \int_0^{s^*}
\left[
\frac{\stackrel{.}{\kappa}}{\kappa}(v;\eta) - Q_n^{(1)}(s^*,\mathcal{E}(v);\eta,\eta)
\right]
M(dv,t^*) + \label{mart-part} \\
&& \mathbb{P}_n \int_0^{s^*}
\left[
\frac{\stackrel{.}{\kappa}}{\kappa}(v;\eta) - Q_n^{(1)}(s^*,\mathcal{E}(v);\eta,\eta)
\right]
A(dv,t^*). \label{comp-part}
\end{eqnarray}
By (C5) and Theorem \ref{theo-master theorem}, the term in (\ref{mart-part}) is
$o_p(1)$. On the other hand, the term in (\ref{comp-part}) becomes, after splitting
the region of integration into the disjoint intervals
$(S_{j-1},S_j]$ for $j = 1, 2, \ldots, N^\dagger(s^*-)+1$ and then
doing a variable transformation,
\begin{eqnarray*}
\mbox{Term (\ref{comp-part})} & = & \int_0^{t^*}
\mathbb{P}_n \left\{
\sum_{j=1}^{N^\dagger(s^*-)+1}
\left[
\frac{\stackrel{.}{\kappa}}{\kappa}(\mathcal{E}_j^{-1}(w);\eta) -
Q_n^{(1)}(s^*,w;\eta,\eta)
\right] \times \right. \\
&& \left. \varphi_j[\mathcal{E}_j^{-1}(w);\eta]
I_{(\mathcal{E}(S_{j-1}),\mathcal{E}(S_j)]}(w)
\right\}
\Lambda_0^0(dw) \\
& = & \int_0^{t^*}
S^{(0)}(s^*,w)
\left[Q_n^{(1)}(s^*,w;\eta^0,\eta) - Q_n^{(1)}(s^*,w;\eta,\eta)\right]
\Lambda_0^0(dw).
\end{eqnarray*}
By conditions (C2) and (C3), this last term will converge uniformly in
probability to $\Psi(s^*,t^*;\eta)$, so that we will have the result
\begin{equation}
\label{limit of score}
\sup_{\eta \in \Gamma}
\left\|\Psi_n(s^*,t^*;\eta) - \Psi(s^*,t^*;\eta)\right\|
\stackrel{p}{\longrightarrow} 0.
\end{equation}
Finally, observe that $\Psi(s^*,t^*;\eta^0) = 0$, so by the condition of
the theorem and coupling with (\ref{limit of score}), it follows from Theorem
5.9 of van der Vaart \cite{Vaa98}) that $\hat{\eta}_n \stackrel{p}{\longrightarrow}
\eta^0$.
\end{proof}

Indeed, there is more to be said based on the following Lemma \ref{lemm-result about second derivative of likelihood}
which will also be used in the weak convergence result proof in Section \ref{sec-weak convergence}.
Since $\mathbf{\Sigma}(s^*,t^*)$
is positive definite, this lemma implies that, in fact, $\eta^0$ is a maximizer of the
limit in probability of the log-partial likelihood
$[l_P(s^*,t^*;\eta) - l_P(s^*,t^*)]/n$.

\begin{lemm}
\label{lemm-result about second derivative of likelihood}
Under conditions (C1)-(C5),
\begin{eqnarray*}
\lefteqn{\stackrel{\cdot}{\Psi}_n(s^*,t^*) \equiv
\nabla_{\eta\eta\trp}
\left\{
\frac{1}{n} l_P(s^*,t^*;\eta)
\right\}|_{\eta=\eta^0} } \\
& = & -\int_0^{t^*}
\mathbb{V}_{\mathbb{Q}_n(s^*,w)}\left[
\frac{\stackrel{.}{\kappa}}{\kappa}\left(\mathcal{E}^{-1}(w)\right)
\right] S^{(0)}(s^*,w) \Lambda_0^0(dw) + o_p(1) \\
& \stackrel{p}{\longrightarrow} & - \mathbf{\Sigma}(s^*,t^*).
\end{eqnarray*}
\end{lemm}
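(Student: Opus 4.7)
The plan is to differentiate the log partial likelihood twice, evaluate at $\eta^0$, split the resulting integral against $N$ into a martingale contribution and a compensator contribution via $N = M + A$, show that the martingale part is $o_p(1)$ using Theorem \ref{theo-master theorem}, and reduce the compensator part to the claimed variance integral using the change-of-variable manipulation already used in the consistency proof.

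First I would differentiate. From (\ref{profile likelihood}), $(1/n) l_P(s^*,t^*;\eta) = \mathbb{P}_n \int_0^{s^*}\{\log\kappa(v;\eta) - \log S^{(0)}(s^*,\mathcal{E}(v);\eta)\}N(dv,t^*)$. Using $\nabla_{\eta\eta\trp}\log f = \ddot f/f - (\dot f/f)^{\otimes 2}$ componentwise, together with Lemma \ref{lemma-s0} which identifies $\dot S^{(0)}/S^{(0)} = Q_n^{(1)}$ and $\ddot S^{(0)}/S^{(0)} = Q_n^{(2)}$, I obtain
\begin{equation*}
\stackrel{\cdot}{\Psi}_n(s^*,t^*) = \mathbb{P}_n \int_0^{s^*} h_n(v)\, N(dv,t^*),
\end{equation*}
with $h_n(v) = \bigl[\ddot\kappa/\kappa(v) - Q_n^{(2)}(s^*,\mathcal{E}(v);\eta^0,\eta^0)\bigr] - \bigl[(\dot\kappa/\kappa(v))^{\otimes 2} - (Q_n^{(1)}(s^*,\mathcal{E}(v);\eta^0,\eta^0))^{\otimes 2}\bigr]$, all evaluated at $\eta^0$.

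For the martingale contribution, I express $h_n(v)$ in the form $H_n^{(n)}(s^*,\mathcal{E}(v))$ using the piecewise representation $v = \mathcal{E}_j^{-1}(\mathcal{E}(v))$ on $(S_{j-1},S_j]$, apply Proposition \ref{prop-change of variable} to rewrite $\int_0^{s^*} h_{n,i}(v) M_i(dv,t^*) = \int_0^{t^*} H_{n,i}^{(n)}(s^*,w) M_i(s^*,dw)$, and note that $(1/n)\sum_i \int = (1/\sqrt n)\cdot W^{(n)}(s^*,t^*)$ for a process satisfying the hypotheses of Theorem \ref{theo-master theorem} (by (C5) and (C4) applied to the quadratic variation). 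Since $W^{(n)}(s^*,t^*)$ is $O_p(1)$, this term is $o_p(1)$. For the compensator contribution I write $A_i(dv,t^*) = Z_i(v,t^*)Y_i^\dagger(v)\kappa_i(v;\eta^0)\lambda_0^0(\mathcal{E}_i(v))\,dv$, partition $(0,s^*]$ into $\cup_j (S_{ij-1},S_{ij}]$, and change variables $w = \mathcal{E}_i(v)$ on each piece; by the definitions of $S^{(0)}$ and $\mathbb{Q}_n$ this converts $\mathbb{P}_n \int_0^{s^*} g(v)\,A(dv,t^*)$ into $\int_0^{t^*} S^{(0)}(s^*,w)\,\mathbb{Q}_n(s^*,w)\bigl[g(\mathcal{E}^{-1}(w))\bigr]\Lambda_0^0(dw)$ whenever $g(v)$ can be written as a function of $(v,\mathcal{E}(v))$ that respects the piecewise structure.

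The key algebraic observation is that $Q_n^{(2)}$ and $(Q_n^{(1)})^{\otimes 2}$ depend on $v$ only through $\mathcal{E}(v)$, so $\mathbb{Q}_n(s^*,w)[Q_n^{(2)}(s^*,w)] = Q_n^{(2)}(s^*,w)$ and similarly for $(Q_n^{(1)})^{\otimes 2}$, while by the very definition in (C3), $\mathbb{Q}_n(s^*,w)[\ddot\kappa/\kappa(\mathcal{E}^{-1}(w);\eta^0)] = Q_n^{(2)}(s^*,w;\eta^0,\eta^0)$. Hence the two $\ddot\kappa$--$Q_n^{(2)}$ terms cancel exactly, and what remains equals $-\int_0^{t^*} S^{(0)}(s^*,w)\{\mathbb{Q}_n[(\dot\kappa/\kappa)^{\otimes 2}] - [\mathbb{Q}_n(\dot\kappa/\kappa)]^{\otimes 2}\}\Lambda_0^0(dw)$, which is the claimed variance integral. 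The convergence $\stackrel{\cdot}{\Psi}_n(s^*,t^*) \stackrel{p}{\to} -\mathbf{\Sigma}(s^*,t^*)$ then follows by combining the uniform convergences in (C2) and (C4), the uniform boundedness of the integrand (a product of uniformly bounded factors converging uniformly), and the finiteness of $\Lambda_0^0(t^*)$.

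The main obstacle I anticipate is not the cancellation (which is algebraic once the change of variable is written out) but verifying carefully that the martingale part is genuinely $o_p(1)$: this requires packaging $h_n$ in the doubly-indexed form required by Proposition \ref{prop-change of variable} and confirming that the resulting $H_{n,i}^{(n)}$ meets the predictability and boundedness conditions of Theorem \ref{theo-master theorem}, so that the $1/\sqrt n$ gain from that theorem, combined with the $1/\sqrt n$ already absorbed by $\mathbb{P}_n$, gives the required $o_p(1)$ order.
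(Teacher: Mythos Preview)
Your proposal is correct and follows essentially the same route as the paper: differentiate twice, split $N=M+A$, show the $M$-part is $o_p(1)$, and convert the $A$-part via the same change-of-variable manipulation into a $\Lambda_0^0(dw)$ integral in which the second-derivative terms cancel at $\eta=\eta^0$, leaving the $\mathbb{Q}_n$-variance. The paper compresses the martingale step into the phrase ``straightforward, though tedious, calculations show that \ldots\ $+\,o_p(1)$'' and performs the change of variable at a general $\eta$ before evaluating at $\eta^0$, but the substance is the same.
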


\begin{proof}
Straightforward, though tedious, calculations show that
\begin{eqnarray*}
\lefteqn{ \stackrel{\cdot}{\Psi}_n(s^*,t^*;\eta)  =
\mathbb{P}_n \int_0^{s^*}
\left[
\frac{\stackrel{\cdot\cdot}{\kappa}}{\kappa}(v;\eta) -
\frac{\stackrel{\cdot\cdot}{S}^{(0)}}{S^{(0)}}\left(s^*,\mathcal{E}(v);\eta\right)
\right] N(dv,t^*) - } \\
&& \mathbb{P}_n \int_0^{s^*}
\left\{
\left[\frac{\stackrel{\cdot}{\kappa}}{\kappa}(v;\eta)\right]^{\otimes 2} -
\left[\frac{\stackrel{\cdot}{S}^{(0)}}{S^{(0)}}\left(s^*,\mathcal{E}(v);\eta\right)\right]^{\otimes 2}
\right\} N(dv,t^*) \\
& = &
\int_0^{t^*}
\left\{
Q_n^{(2)}(s^*,w;\eta^0,\eta) - Q_n^{(2)}(s^*,w;\eta,\eta)
\right\} S^{(0)}(s^*,w;\eta^0) \Lambda_0^0(dw) - \\
&& \int_0^{t^*}
\left\{
\mathbb{Q}_n(s^*,w;\eta^0) \left[
\frac{\stackrel{\cdot}{\kappa}}{\kappa}\left(\mathcal{E}^{-1}(w);\eta\right)\right]^{\otimes 2} -
\left[ Q_n^{(1)}(s^*,w;\eta,\eta) \right]^{\otimes 2}
\right\} \times \\
&&
S^{(0)}(s^*,w;\eta^0) \Lambda_0^0(dw) + o_p(1).
\end{eqnarray*}
Evaluating at $\eta = \eta^0$, and noting that
\begin{displaymath}
\mathbb{Q}_n(s^*,w;\eta^0) \left[
\frac{\stackrel{\cdot}{\kappa}}{\kappa}\left(\mathcal{E}^{-1}(w);\eta^0\right)\right]
=
Q_n^{(1)}(s^*,w;\eta^0,\eta^0)
\end{displaymath}
then yields the representation given in the statement of the lemma.
Letting $n \rightarrow \infty$, the limiting matrix is $-\mathbf{\Sigma}(s^*,t^*)$.
\end{proof}

\begin{theo}
\label{theo-consistency of lambda-hat}
Under conditions (C1)-(C5), $\hat{\Lambda}_{0n}(s^*,\cdot)$ converges
uniformly in probability to $\Lambda_0^0(\cdot)$ on $[0,t^*]$, that is,
\begin{displaymath}
\sup_{t \in [0,t^*]} \left|\hat{\Lambda}_{0n}(s^*,t) - \Lambda_0^0(t)\right|
\stackrel{p}{\longrightarrow} 0.
\end{displaymath}
\end{theo}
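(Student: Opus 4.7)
The plan is to decompose $\hat{\Lambda}_{0n}(s^*,t)-\Lambda_0^0(t)$ into a deterministic ``bias'' term and a stochastic-integral ``noise'' term, and then show each converges uniformly to $0$ in probability on $[0,t^*]$. By Propositions \ref{prop-mart property} and \ref{prop-rep of A process}, one has the sample-level identity $\mathbb{P}N(s^*,dw)=S^{(0)}(s^*,w;\eta^0)\Lambda_0^0(dw)+\mathbb{P}M(s^*,dw;\theta^0)$. Substituting this into (\ref{ABN estimator}) yields
\begin{eqnarray*}
\hat{\Lambda}_{0n}(s^*,t)-\Lambda_0^0(t) & = &
\int_0^t \left[\frac{J(s^*,w;\hat{\eta}_n)\,S^{(0)}(s^*,w;\eta^0)}{S^{(0)}(s^*,w;\hat{\eta}_n)}-1\right]\Lambda_0^0(dw) \\
&& {}+ \int_0^t \frac{J(s^*,w;\hat{\eta}_n)}{S^{(0)}(s^*,w;\hat{\eta}_n)}\,\mathbb{P}M(s^*,dw;\theta^0)
\equiv T_A^{(n)}(t)+T_B^{(n)}(t).
\end{eqnarray*}

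For the bias piece $T_A^{(n)}$, I would invoke the consistency $\hat{\eta}_n\stackrel{p}{\to}\eta^0$ established in the preceding theorem together with (C2). The uniform-in-$\eta$ convergence of $S^{(0)}$ to $s^{(0)}$, the continuity of $\eta\mapsto s^{(0)}(s^*,w;\eta)$ inherited from (C1)--(C2), and $\inf_{w,\eta}s^{(0)}(s^*,w;\eta)>0$ together ensure that on an event of probability tending to one we have $J(s^*,w;\hat{\eta}_n)\equiv 1$ on $[0,t^*]$, the denominator $S^{(0)}(s^*,w;\hat{\eta}_n)$ is uniformly bounded below by a positive constant, and $\sup_w|S^{(0)}(s^*,w;\hat{\eta}_n)-S^{(0)}(s^*,w;\eta^0)|\stackrel{p}{\to}0$ by the triangle inequality. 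Consequently $\sup_t|T_A^{(n)}(t)|$ is dominated by an $o_p(1)$ numerator divided by a bounded-below denominator, times $\Lambda_0^0(t^*)<\infty$, and hence tends to $0$ in probability.

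For the noise term $T_B^{(n)}$ I would proceed in two steps. In Step (i), with $H_i^{(n)}(s^*,w)\equiv J(s^*,w;\eta^0)/S^{(0)}(s^*,w;\eta^0)$ (bounded and $\mathfrak{F}$-predictable), the conditions (a)--(c) of Theorem \ref{theo-master theorem} are verified via (C2), (C4), and the lower bound on $s^{(0)}$; the theorem then shows that $\sqrt{n}\int_0^{\cdot}[J/S^{(0)}](s^*,w;\eta^0)\,\mathbb{P}M(s^*,dw;\theta^0)$ converges weakly on $D[0,t^*]$ to a tight Gaussian process, so the integral itself is $O_p(n^{-1/2})=o_p(1)$ uniformly in $t$. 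In Step (ii), the replacement error obtained by swapping $\hat{\eta}_n$ for $\eta^0$ in the integrand is controlled by a Taylor expansion of $J/S^{(0)}$ in $\eta$ around $\eta^0$: the expansion factors out $\hat{\eta}_n-\eta^0=o_p(1)$, the leftover integrand involves $\dot{S}^{(0)}/[S^{(0)}]^2$ evaluated at an intermediate point (bounded, and at $\eta^0$ predictable), and a further appeal to Theorem \ref{theo-master theorem} keeps the residual stochastic integral $O_p(n^{-1/2})$. Combining both steps yields $\sup_t|T_B^{(n)}(t)|\stackrel{p}{\to}0$, and summing with the bias estimate completes the proof. The main obstacle will be Step (ii): since $1/S^{(0)}(s^*,w;\hat{\eta}_n)$ is not predictable in the $s$-filtration that Theorem \ref{theo-master theorem} requires, that theorem cannot be invoked directly with $\hat{\eta}_n$ inside the integrand, and a clean resolution requires detouring through the deterministic $\eta^0$ together with the uniform-in-$\eta$ regularity guaranteed by (C1)--(C3).
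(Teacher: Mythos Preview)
Your decomposition and the paper's are nearly the same, and your handling of the bias piece $T_A^{(n)}$ and of Step~(i) for $T_B^{(n)}$ matches the paper's reasoning almost verbatim. The one substantive difference is \emph{where} the replacement $\hat{\eta}_n\to\eta^0$ is carried out. You perform it inside the $\mathbb{P}M$-integral (Step~(ii)), which, as you yourself note, runs into the predictability obstacle for Theorem~\ref{theo-master theorem}. The paper sidesteps this entirely by doing the swap one step earlier, against the $\mathbb{P}N$-integral: it inserts the intermediate quantity $\int_0^t [J/S^{(0)}](s^*,w;\eta^0)\,\mathbb{P}N(s^*,dw)$ and bounds
\[
\left|\hat{\Lambda}_{0n}(s^*,t)-\int_0^t \frac{J(s^*,w;\eta^0)}{S^{(0)}(s^*,w;\eta^0)}\,\mathbb{P}N(s^*,dw)\right|
\le \sup_{w}\left|\frac{J}{S^{(0)}}(s^*,w;\hat{\eta}_n)-\frac{J}{S^{(0)}}(s^*,w;\eta^0)\right|\cdot \mathbb{P}N(s^*,t^*),
\]
which is $o_p(1)\cdot O_p(1)$ because $\mathbb{P}N(s^*,\cdot)$ is a nonnegative increasing process with $\mathbb{P}N(s^*,t^*)=O_p(1)$. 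No Taylor expansion, no predictability issue. Your Step~(ii) can be rescued by the same total-variation trick (since the variation of $\mathbb{P}M(s^*,\cdot)$ is controlled by $\mathbb{P}N(s^*,t^*)+\mathbb{P}A(s^*,t^*)=O_p(1)$), but then you are effectively reproducing the paper's argument; the Taylor-plus-Theorem~\ref{theo-master theorem} route you sketch does not go through as stated because the intermediate point $\tilde{\eta}_n$ depends on all of $\mathfrak{D}_n$ and the resulting integrand fails condition~(a) of that theorem.
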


\begin{proof}
With $\Lambda_0^*(s^*,t) = \int_0^t I\{S^{(0)}(s^*,w;\hat{\eta}) > 0\} \Lambda_0^0(dw)$,
we have that
\begin{eqnarray}
\lefteqn{
|\hat{\Lambda}_0(s^*,t) - \Lambda_0^0(t)| \le
|\hat{\Lambda}_0(s^*,t) - \Lambda_0^*(s^*,t)| +
|\Lambda_0^*(s^*,t) - \Lambda_0^0(t)| } \nonumber \\
& \le & \left|\hat{\Lambda}_0(s^*,t) - \int_0^t \frac{I\{S^{(0)}(s^*,w) > 0\}}{S^{(0)}(s^*,w)}
\mathbb{P} N(s^*,dw)\right| + \label{term1} \\
&& \left|\int_0^t \frac{I\{S^{(0)}(s^*,w) > 0\}}{S^{(0)}(s^*,w)} \mathbb{P} M(s^*,dw)\right| +
\label{term2} \\
&& \left|\int_0^t I\{S^{(0)}(s^*,w;\hat{\eta}) = 0\} \Lambda_0^0(dw)\right|.
\label{term3}
\end{eqnarray}
Term (\ref{term3}) is bounded above by
$$\left|\int_0^{t^*} I\{S^{(0)}(s^*,w;\hat{\eta}) = 0\} \Lambda_0^0(dw)\right|,$$
which is $o_p(1)$ since $S^{(0)}(s^*,w;\hat{\eta})
\stackrel{p}{\longrightarrow} s^{(0)}(s^*,w)$ and by (C2) we have
$\Lambda_0^0(t^*) < \infty$ and $\inf_{w \in [0,t^*]} s^{(0)}(s^*,w) > 0$.
Term (\ref{term1}) is bounded above by
\begin{displaymath}
\left\{\sup_{w \in [0,t^*]}\left|
\frac{I\{S^{(0)}(s^*,w;\hat{\eta}) > 0\}}{S^{(0)}(s^*,w;\hat{\eta})} -
\frac{I\{S^{(0)}(s^*,w) > 0\}}{S^{(0)}(s^*,w)}\right|\right\}
\mathbb{P} N(s^*,t^*).
\end{displaymath}
But $\mathbb{P} N(s^*,t^*) = \mathbb{P} M(s^*,t^*) + \mathbb{P} A(s^*,t^*)$.
By Theorem \ref{theo-master theorem}, $\mathbb{P} M(s^*,t^*) = o_p(1)$, while
$\mathbb{P} A(s^*,t^*) = \int_0^{t^*} S^{(0)}(s^*,w) \Lambda_0^0(dw)$, which
converges in probability to $\int_0^{t^*} s^{(0)}(s^*,w) \Lambda_0^0(dw)$,
a finite quantity by (C2). Thus, $\mathbb{P} N(s^*,t^*) = O_p(1)$. Since
\begin{displaymath}
\sup_{w \in [0,t^*]}\left|
\frac{I\{S^{(0)}(s^*,w;\hat{\eta}) > 0\}}{S^{(0)}(s^*,w;\hat{\eta})} -
\frac{I\{S^{(0)}(s^*,w) > 0\}}{S^{(0)}(s^*,w)}\right| = o_p(1)
\end{displaymath}
it therefore follows that term (\ref{term1}) is $o_p(1)$. Finally,
by Theorem \ref{theo-master theorem}, we have that the process
\begin{displaymath}
\left\{
\frac{1}{\sqrt{n}} \sum_{i=1}^n \int_0^t
\frac{I\{S^{(0)}(s^*,w) > 0\}}{S^{(0)}(s^*,w)} M_i(s^*,dw):\ t \in [0,t^*]
\right\}
\end{displaymath}
converges weakly to a zero-mean Gaussian process $G$ whose covariance function is
$$\mbox{Cov}(G(t_1),G(t_2)) = \int_0^{\min(t_1,t_2)} \frac{\Lambda_0^0(dw)}{s^{(0)}(s^*,w)}$$
for $t_1, t_2 \in [0,t^*]$. As a consequence,
\begin{displaymath}
\sup_{t \in [0,t^*]} \left| \frac{1}{\sqrt{n}} \sum_{i=1}^n \int_0^t
\frac{I\{S^{(0)}(s^*,w) > 0\}}{S^{(0)}(s^*,w)} M_i(s^*,dw) \right|
\end{displaymath}
converges weakly to $\sup_{t \in [0,t^*]} |G(t)|$, which is $O_p(1)$. It follows
that
\begin{eqnarray*}
\lefteqn{ \sup_{t \in [0,t^*]}
\left|\int_0^t \frac{I\{S^{(0)}(s^*,w) > 0\}}{S^{(0)}(s^*,w)} \mathbb{P} M(s^*,dw)\right| } \\
& = & \frac{1}{\sqrt{n}} \sup_{t \in [0,t^*]} \left|
\frac{1}{\sqrt{n}} \sum_{i=1}^n \int_0^t
\frac{I\{S^{(0)}(s^*,w) > 0\}}{S^{(0)}(s^*,w)} M_i(s^*,dw) \right| \\
& = & o_p(1).
\end{eqnarray*}
This completes the proof of the theorem.
\end{proof}

\section{Distributional Properties}
\label{sec-weak convergence}

In this section we establish the limiting distributional properties of
$\{\sqrt{n}[\hat{\eta}_n - \eta^0], n=1,2,\ldots\}$ and $\{W_n(s^*,t): t \in \mathcal{T};
n= 1,2,\ldots\}$, where
\begin{displaymath}
W_n(s^*,t) = \sqrt{n} \left[
\hat{\Lambda}_0^{(n)}(s^*,t) - \Lambda_0^0(t)\right].
\end{displaymath}
Define the process $\{B_n(s^*,t): t \in \mathcal{T}; n=1,2,\ldots\}$ according to
\begin{displaymath}
B_n(s^*,t) = \int_0^{t}
I\{S^{(0)}(s^*,w) > 0\}
\frac{\stackrel{.}{S}^{(0)}(s^*,w)}{[S^{(0)}(s^*,w)]^2}
\mathbb{P}_n N(s^*,dw).
\end{displaymath}
Let us also define the process $\{V_n(s^*,t): t \in \mathcal{T}; n=1,2,\ldots\}$
via
\begin{displaymath}
V_n(s^*,t) = \sqrt{n}
\left[\hat{\Lambda}_0^{(n)}(s^*,t) - \Lambda_0^0(t)\right] +
\sqrt{n} (\hat{\eta}_n - \eta^0)\trp B_n(s^*,t).
\end{displaymath}
Furthermore, we shall assume that $\hat{\eta}_n$ solves the equation
\begin{displaymath}
U_P^{(n)}(s^*,t^*;\eta) = 0
\quad \mbox{with} \quad
U_P^{(n)}(s^*,t;\eta) = \nabla_\eta l_P(s^*,t;\eta).
\end{displaymath}
We now present and prove a result from which the asymptotic properties follow.

\begin{theo}
\label{theo-asymptotic respresentations}
Under conditions (C1)-(C5), we have the representations
\begin{eqnarray}
\lefteqn{ \sqrt{n}(\hat{\eta}_n - \eta^0) =
\left[
\Sigma(s^*,t^*)
\right]^{-1} \times } \nonumber \\ &&
\left\{
\sqrt{n} \mathbb{P}_n
\int_0^{t^*}
\left[
\frac{\stackrel{.}{\kappa}}{\kappa}
\left[
\mathcal{E}^{-1}(w)
\right] -
\frac{\stackrel{.}{S}^{(0)}}{S^{(0)}}(s^*,w)
\right]
M(s^*,dw)
\right\}
+ o_p(1);
\label{asy rep of eta hat}
\end{eqnarray}
and
\begin{equation}
\label{asy rep Lambda hat}
V_n(s^*,t) = \sqrt{n} \int_0^{t^*}
I(w \le t)
\frac{I\{S^{(0)}(s^*,w) > 0\}}{S^{(0)}(s^*,w)}
\mathbb{P}_n M(s^*,dw) + o_p(1).
\end{equation}
Furthermore, $\{\sqrt{n}(\hat{\eta}_n - \eta^0)\}$ and
$\{V_n(s^*,t): t \in \mathcal{T}\}$ are asymptotically independent
with each weakly converging to Gaussian limits.
\end{theo}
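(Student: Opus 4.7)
The plan is to derive both asymptotic representations by linearizing the defining equations of $\hat\eta_n$ and $\hat\Lambda_0^{(n)}$ around $\eta^0$, reducing each to a normalized martingale stochastic integral, and then invoking a vector-valued version of Theorem~\ref{theo-master theorem} on the stacked integrand to obtain joint Gaussian convergence; asymptotic independence will follow from a direct computation showing that the limiting cross-covariance kernel vanishes.

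For (\ref{asy rep of eta hat}), the starting point is the score equation $U_P^{(n)}(s^*,t^*;\hat\eta_n) = 0$ together with a coordinate-wise mean-value expansion
\begin{equation*}
0 = \frac{1}{\sqrt n}\, U_P^{(n)}(s^*,t^*;\eta^0) + \left[\frac{1}{n}\stackrel{.}{U}_P^{(n)}(s^*,t^*;\tilde\eta_n)\right]\sqrt n(\hat\eta_n - \eta^0),
\end{equation*}
for some $\tilde\eta_n$ on the segment between $\hat\eta_n$ and $\eta^0$. Consistency of $\hat\eta_n$ combined with Lemma~\ref{lemm-result about second derivative of likelihood} (extended to a neighborhood of $\eta^0$ via the uniform controls in (C2)--(C5)) yields $n^{-1}\stackrel{.}{U}_P^{(n)}(s^*,t^*;\tilde\eta_n) \stackrel{p}{\longrightarrow} -\mathbf{\Sigma}(s^*,t^*)$. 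Then the decomposition (\ref{mart-part})-(\ref{comp-part}) of $\Psi_n$ at $\eta=\eta^0$ shows that the compensator piece (\ref{comp-part}) vanishes identically because the two $Q_n^{(1)}(s^*,w;\eta^0,\eta^0)$ terms coincide; Proposition~\ref{prop-change of variable} rewrites the surviving martingale piece in doubly-indexed form, and Lemma~\ref{lemma-s0} replaces $Q_n^{(1)}(s^*,\mathcal E(v);\eta^0,\eta^0)$ by $\stackrel{.}{S}^{(0)}/S^{(0)}(s^*,\mathcal E(v))$, giving (\ref{asy rep of eta hat}).

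For (\ref{asy rep Lambda hat}), dropping the $I\{S^{(0)}>0\}$ indicator at $o_p(1)$ cost (as in the proof of Theorem~\ref{theo-consistency of lambda-hat}), I write
\begin{equation*}
\hat\Lambda_0^{(n)}(s^*,t) - \Lambda_0^0(t) = \int_0^t \frac{S^{(0)}(s^*,w) - S^{(0)}(s^*,w;\hat\eta_n)}{S^{(0)}(s^*,w;\hat\eta_n)}\,\Lambda_0^0(dw) + \int_0^t \frac{\mathbb{P}_n M(s^*,dw)}{S^{(0)}(s^*,w;\hat\eta_n)}.
\end{equation*}
A first-order Taylor expansion in $\eta$ gives $S^{(0)}(s^*,w) - S^{(0)}(s^*,w;\hat\eta_n) = -\stackrel{.}{S}^{(0)}(s^*,w;\tilde\eta_n)\trp(\hat\eta_n-\eta^0)$, so after multiplying by $\sqrt n$ the first integral becomes $-\sqrt n(\hat\eta_n-\eta^0)\trp\int_0^t (\stackrel{.}{S}^{(0)}/S^{(0)})(s^*,w)\,\Lambda_0^0(dw) + o_p(1)$, uniformly in $t$, by (C2)--(C3) and consistency. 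Writing $\mathbb{P}_n N(s^*,dw) = S^{(0)}(s^*,w)\Lambda_0^0(dw) + \mathbb{P}_n M(s^*,dw)$ in the definition of $B_n$ and using that $\mathbb{P}_n M(s^*,\cdot) = O_p(n^{-1/2})$ uniformly in $t$ (Theorem~\ref{theo-master theorem} with $H\equiv 1$) shows that $B_n(s^*,t)$ converges uniformly in probability to the same limit; hence the $B_n$ correction in $V_n$ cancels the bias term. Replacing the denominator $S^{(0)}(s^*,w;\hat\eta_n)$ by $S^{(0)}(s^*,w)$ in the residual martingale integral at $o_p(1)$ cost delivers (\ref{asy rep Lambda hat}).

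Each representation takes the form $\sqrt n\,\mathbb{P}_n\!\int_0^{t^*} H^{(n)}(s^*,w)\,M(s^*,dw)$ for a bounded predictable $H^{(n)}$, so stacking the two integrands and verifying conditions (a)--(c) of Theorem~\ref{theo-master theorem} for the $(k+1)$-dimensional stacked process gives joint weak convergence of $(\sqrt n(\hat\eta_n - \eta^0),\, V_n(s^*,\cdot))$ to a Gaussian limit on $\Re^k \times D[0,t^*]$. The cross-block of the limiting covariance at level $t$ is the limit of
\begin{equation*}
\int_0^t \mathbb{Q}_n(s^*,w)\!\left\{\!\left[\frac{\stackrel{.}{\kappa}}{\kappa}(\mathcal E^{-1}(w)) - \frac{\stackrel{.}{S}^{(0)}}{S^{(0)}}(s^*,w)\right]\frac{1}{S^{(0)}(s^*,w)}\!\right\}S^{(0)}(s^*,w)\,\Lambda_0^0(dw),
\end{equation*}
and since $1/S^{(0)}$ is constant with respect to $\mathbb{Q}_n(s^*,w)$ while the bracketed factor has $\mathbb{Q}_n$-mean zero by Lemma~\ref{lemma-s0}, this kernel vanishes identically, yielding independence in the Gaussian limit. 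The main technical obstacle I anticipate is the uniform-in-$t$ cancellation in the preceding paragraph: one must show that the Taylor remainder and $\sqrt n(\hat\eta_n-\eta^0)\trp B_n(s^*,t)$ agree to order $o_p(1)$ uniformly in $t\in[0,t^*]$, which requires tightness of $\sqrt n(\hat\eta_n-\eta^0)$ from the first representation combined with the uniform convergences postulated in (C2)--(C3), together with careful handling of the random evaluation points $\tilde\eta_n,\hat\eta_n$ inside $\stackrel{.}{S}^{(0)}$ and $S^{(0)}$.
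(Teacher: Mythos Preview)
Your proposal is correct and follows essentially the same approach as the paper: Taylor-expand the score equation using Lemma~\ref{lemm-result about second derivative of likelihood} to get (\ref{asy rep of eta hat}), Taylor-expand $\hat{\Lambda}_0^{(n)}$ in $\eta$ so that the bias is absorbed by the $B_n$ correction to yield (\ref{asy rep Lambda hat}), then stack the two martingale integrands and observe that the cross-covariance kernel vanishes because $\mathbb{Q}_n(s^*,w)[\stackrel{.}{\kappa}/\kappa - \stackrel{.}{S}^{(0)}/S^{(0)}] = 0$. The only cosmetic difference is that the paper Taylor-expands $1/S^{(0)}(s^*,w;\hat\eta_n)$ directly (so the bias term emerges with integrator $\mathbb{P}_n N(s^*,dw)$, matching $B_n$ immediately), whereas you first split $\mathbb{P}_n N$ into $S^{(0)}\Lambda_0^0 + \mathbb{P}_n M$ and then expand the numerator $S^{(0)}(\eta^0)-S^{(0)}(\hat\eta_n)$; the two routes are algebraically equivalent and require the same uniform controls from (C2)--(C3) and the tightness of $\sqrt n(\hat\eta_n-\eta^0)$ that you correctly flag.
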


\begin{proof}
From the definition of $\hat{\eta}_n$, we have by first-order Taylor expansion that
\begin{displaymath}
\sqrt{n}(\hat{\eta}_n - \eta^0) =
\left[
-\stackrel{\cdot}{\Psi}_n(s^*,t^*;\tilde{\eta}_n)
\right]^{-1}
\left[
\sqrt{n} \Psi_n(s^*,t^*;\eta^0)
\right]
\end{displaymath}
where $\tilde{\eta}_n$ is in a neighborhood centered at $\eta^0$ and whose radius is $||\hat{\eta}_n - \eta^0||$.
It is easy to see that
\begin{eqnarray*}
\sqrt{n} \Psi_n(s^*,t^*;\eta^0) & = &
\sqrt{n} \mathbb{P}_n \int_0^{s^*}
\left\{
\frac{\stackrel{\cdot}{\kappa}}{\kappa}(v) -
\frac{\stackrel{\cdot}{S}^{(0)}}{S^{(0)}}(s^*,\mathcal{E}(v))
\right\} M(dv,t^*) \\
& = &
\sqrt{n} \mathbb{P}_n \int_0^{t^*}
\left\{
\frac{\stackrel{\cdot}{\kappa}}{\kappa}[\mathcal{E}^{-1}(w)] -
\frac{\stackrel{\cdot}{S}^{(0)}}{S^{(0)}}(s^*,w)
\right\} M(s^*,dw).
\end{eqnarray*}
Furthermore, since $\hat{\eta}_n \stackrel{p}{\rightarrow} \eta^0$, and by virtue of
Lemma \ref{lemm-result about second derivative of likelihood}, we have that
\begin{displaymath}
\left[
-\stackrel{\cdot}{\Psi}_n(s^*,t^*;\tilde{\eta}_n)
\right]^{-1}
=
[\mathbf{\Sigma}(s^*,t^*)]^{-1} + o_p(1).
\end{displaymath}
As such we obtain the representation for $\sqrt{n}(\hat{\eta}_n - \eta^0)$.

Once again, by first-order Taylor expansion, we have that on the set where
$S^{(0)}(s^*,w;\hat{\eta}_n) > 0$,
\begin{displaymath}
\frac{1}{S^{(0)}(s^*,w;\hat{\eta}_n)} =
\frac{1}{S^{(0)}(s^*,w;{\eta}^0)} -
(\hat{\eta}_n - \eta^0)\trp \frac{\stackrel{\cdot}{S}^{(0)}(s^*,w;\tilde{\eta}_n)}
{[S^{(0)}(s^*,w;\tilde{\eta}_n)]^2}
\end{displaymath}
with $\tilde{\eta}_n$ inside the ball centered at $\eta^0$ with radius $||\hat{\eta}_n - \eta^0||$.
Defining
\begin{displaymath}
\Lambda_0^*(s^*,t) = \int_0^t I\{S^{(0)}(s^*,w;\hat{\eta}_n) > 0\} \Lambda_0^0(dw),
\end{displaymath}
and recalling that
\begin{displaymath}
\hat{\Lambda}_0^{(n)}(s^*,t) = \int_0^t \frac{I\{S^{(0)}(s^*,w;\hat{\eta}_n) > 0\}}
{S^{(0)}(s^*,w;\hat{\eta}_n)} \mathbb{P}_n N(s^*,dw),
\end{displaymath}
we obtain
\begin{eqnarray*}
\lefteqn{ \sqrt{n} \left[
\hat{\Lambda}_0^{(n)}(s^*,t) - \Lambda_0^*(s^*,t) \right] =
\int_0^t \frac{I\{S^{(0)}(s^*,w;\hat{\eta}_n) > 0\}}
{S^{(0)}(s^*,w;\hat{\eta}_n)} \sqrt{n} \mathbb{P}_n M(s^*,dw) - } \\
&  & \sqrt{n}(\hat{\eta}_n - \eta^0)\trp
\int_0^t I\{S^{(0)}(s^*,w;\hat{\eta}_n) > 0\}
\frac{[\stackrel{\cdot}{S}^{(0)}(s^*,w;\tilde{\eta}_n)]}
{[S^{(0)}(s^*,w;\tilde{\eta}_n)]^2}
\mathbb{P}_n N(s^*,dw).
\end{eqnarray*}
The representation for $V_n(s^*,t)$ given in the statement of the lemma now follows by noting that
\begin{eqnarray*}
& \sup_{0 \le t \le t^*} \|\sqrt{n}[\Lambda_0^*(s^*,t) - \Lambda_0^0(t)\| = o_p(1); & \\
& \sup_{0 \le t \le t^*} \|S^{(0)}(s^*,t;\hat{\eta}_n) -  S^{(0)}(s^*,t;\eta^0)\| = o_p(1); \\
& \sup_{0 \le t \le t^*} \|\stackrel{\cdot}{S}^{(0)}(s^*,t;\hat{\eta}_n) - \stackrel{\cdot}{S}^{(0)}(s^*,t;\eta^0)\| = o_p(1). &
\end{eqnarray*}
Finally, let $\mathbf{t} = (t_1,t_2,\ldots,t_p)\trp \subset \mathcal{T}$. From the just-established representations,
with $I\{w \le \mathbf{t}\} = (I\{w \le t_1\}, \ldots, I\{w \le t_p\})\trp$, we have
\begin{eqnarray*}
\lefteqn{\left[
\begin{array}{c}
\sqrt{n}(\hat{\eta}_n - \eta^0) \\
V_n(s^*,\mathbf{t})
\end{array}
\right]
 =
\left[
\begin{array}{cc}
\mathbf{\Sigma}(s^*,t^*)^{-1} & \mathbf{0} \\
\mathbf{0} & \mathbf{I}
\end{array}
\right] \times } \\
&&
\sqrt{n}\mathbb{P}_n \int_0^{t^*}
\left[
\begin{array}{c}
\frac{\stackrel{\cdot}{\kappa}}{\kappa}[\mathcal{E}^{-1}(w)] -
\frac{\stackrel{\cdot}{S}^{(0)}}{S^{(0)}}(s^*,w) \\
I(w \le \mathbf{t}) \frac{I\{S^{(0)}(s^*,w) > 0\}}{S^{(0)}(s^*,w)}
\end{array}
\right]
M(s^*,dw) + o_p(1).
\end{eqnarray*}
By the main weak convergence theorem or by invoking the Martingale Central Limit
Theorem after a time transformation, this converges weakly to the random vector
\begin{displaymath}
\left[
\begin{array}{cc}
\mathbf{W}_1 \\
\mathbf{W}_2
\end{array}
\right]
=
\left[
\begin{array}{cc}
\mathbf{\Sigma}(s^*,t^*)^{-1} & \mathbf{0} \\
\mathbf{0} & \mathbf{I}
\end{array}
\right]
\left[
\begin{array}{c}
\mathbf{Z}_1 \\
\mathbf{Z}_2
\end{array}
\right]
\end{displaymath}
where $(\mathbf{Z}_1\trp, \mathbf{Z}_2\trp)\trp$ is a $(k+p)$-dimensional
zero mean multivariate normal random vector with covariance matrix
\begin{eqnarray*}
\lefteqn{\mbox{Cov}\left[
\left(\begin{array}{c}
\mathbf{Z}_1 \\ \mathbf{Z}_2
\end{array}\right),
\left(\begin{array}{c}
\mathbf{Z}_1 \\ \mathbf{Z}_2
\end{array}\right)
\right]   =
\mbox{plim}_{n\rightarrow\infty} \int_0^{t^*}
\mathbb{Q}_n(s^*,w) \times } \\ &&
\left[
\begin{array}{c}
\frac{\stackrel{\cdot}{\kappa}}{\kappa}[\mathcal{E}^{-1}(w)] -
\frac{\stackrel{\cdot}{S}^{(0)}}{S^{(0)}}(s^*,w) \\
I(w \le \mathbf{t}) \frac{I\{S^{(0)}(s^*,w) > 0\}}{S^{(0)}(s^*,w)}
\end{array}
\right]^{\otimes 2}
S^{(0)}(s^*,w) \Lambda_0^0(dw).
\end{eqnarray*}
However, the covariance matrix between $\mathbf{Z}_1$ and $\mathbf{Z}_2$
equals $\mathbf{0}$ since, for every $w \in \mathcal{T}$,
\begin{displaymath}
\mathbb{Q}_n(s^*,w)
\left[
\frac{\stackrel{\cdot}{\kappa}}{\kappa}[\mathcal{E}^{-1}(w)] -
\frac{\stackrel{\cdot}{S}^{(0)}}{S^{(0)}}(s^*,w)
\right] = \mathbf{0}.
\end{displaymath}
Because of the Gaussian limits, this then establishes that $\sqrt{n}(\hat{\eta} - \eta^0)$ and $V_n(s^*,\cdot)$ are
asymptotically independent.
\end{proof}

The following two corollaries are then immediate consequences of the preceding
theorem and elements of its proof.

\begin{coro}
\label{coro-asymptotics of eta-hat}
Under the conditions of Theorem \ref{theo-asymptotic respresentations},
as $n \rightarrow \infty$,
\begin{displaymath}
\sqrt{n}(\hat{\eta}_n - \eta^0) \stackrel{d}{\longrightarrow}
N\left(0, \Sigma(s^*,t^*)^{-1}\right).
\end{displaymath}
\end{coro}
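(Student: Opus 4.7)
The plan is to read off the conclusion directly from the asymptotic representation (\ref{asy rep of eta hat}) established in Theorem \ref{theo-asymptotic respresentations}, combined with one application of the master weak convergence result (Theorem \ref{theo-master theorem}) and Slutsky's theorem. Writing
\begin{displaymath}
H^{(n)}_i(s^*,w) = \frac{\stackrel{.}{\kappa}_i}{\kappa_i}\bigl[\mathcal{E}_i^{-1}(w)\bigr] - \frac{\stackrel{.}{S}^{(0)}}{S^{(0)}}(s^*,w),
\end{displaymath}
the representation says $\sqrt{n}(\hat{\eta}_n - \eta^0) = \Sigma(s^*,t^*)^{-1} Z_n + o_p(1)$, where $Z_n = n^{-1/2}\sum_{i=1}^n \int_0^{t^*} H^{(n)}_i(s^*,w) M_i(s^*,dw)$. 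So I would first verify the hypotheses of Theorem \ref{theo-master theorem} for the array $H^{(n)}_i$, then apply it to get a Gaussian limit for $Z_n$, then hit the identity with $\Sigma(s^*,t^*)^{-1}$.

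For the hypotheses of the master theorem, condition (a) on boundedness and predictability of $H^{(n)}_i$ follows from (C5) together with Lemma \ref{lemma-s0} (which rewrites $\stackrel{.}{S}^{(0)}/S^{(0)}$ as $Q_n^{(1)}(s^*,\cdot;\eta^0,\eta^0)$). Condition (b) is exactly (C2). For condition (c), the key observation is that the centering term $\stackrel{.}{S}^{(0)}/S^{(0)}(s^*,w) = Q_n^{(1)}(s^*,w;\eta^0,\eta^0) = \mathbb{Q}_n(s^*,w)\bigl[\stackrel{.}{\kappa}/\kappa(\mathcal{E}^{-1}(w))\bigr]$ is precisely the $\mathbb{Q}_n$-mean of $\stackrel{.}{\kappa}/\kappa(\mathcal{E}^{-1}(w))$, so
\begin{displaymath}
\mathbb{Q}_n(s^*,w)\bigl[H^{(n)}(s^*,w)\bigr]^{\otimes 2} = \mathbb{V}_{\mathbb{Q}_n(s^*,w)}\!\left[\frac{\stackrel{.}{\kappa}}{\kappa}\bigl(\mathcal{E}^{-1}(w)\bigr)\right],
\end{displaymath}
which converges uniformly in probability to $\mathbf{v}(s^*,w)$ by (C4). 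The limiting covariance is then
\begin{displaymath}
\int_0^{t^*} \mathbf{v}(s^*,w)\, s^{(0)}(s^*,w)\, \Lambda_0^0(dw) = \mathbf{\Sigma}(s^*,t^*),
\end{displaymath}
which is positive definite by (C4). Applied at the single time $t=t^*$, Theorem \ref{theo-master theorem} yields $Z_n \stackrel{d}{\longrightarrow} N(0,\mathbf{\Sigma}(s^*,t^*))$.

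To finish, I would apply Slutsky's theorem (or the continuous mapping theorem) to the product $\Sigma(s^*,t^*)^{-1} Z_n + o_p(1)$: since $\Sigma(s^*,t^*)$ is a deterministic invertible matrix, the limit is the linear image of a centered Gaussian, with covariance
\begin{displaymath}
\Sigma(s^*,t^*)^{-1}\,\mathbf{\Sigma}(s^*,t^*)\,\Sigma(s^*,t^*)^{-1} = \Sigma(s^*,t^*)^{-1},
\end{displaymath}
giving the claimed $N(0,\Sigma(s^*,t^*)^{-1})$ limit.

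I do not expect a serious obstacle here: the theorem has already been carried out in Theorem \ref{theo-asymptotic respresentations}, and this corollary is just marginalizing the joint limit at a single coordinate. The one spot where I would be careful is the algebraic identification $\mathbb{Q}_n[H^{(n)}]^{\otimes 2} = \mathbb{V}_{\mathbb{Q}_n}[\stackrel{.}{\kappa}/\kappa(\mathcal{E}^{-1}(\cdot))]$, because that is what makes the limiting quadratic variation match exactly $\mathbf{\Sigma}(s^*,t^*)$ (rather than some larger matrix), and thereby forces the sandwich $\Sigma^{-1}\mathbf{\Sigma}\Sigma^{-1}$ to collapse to $\Sigma^{-1}$; but this reduces to Lemma \ref{lemma-s0} applied at $\eta = \eta^0$.
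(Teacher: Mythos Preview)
Your proposal is correct and follows essentially the same route as the paper: the paper's proof is the one-line observation that, in the proof of Theorem~\ref{theo-asymptotic respresentations}, the limit $\mathbf{Z}_1$ was already shown to be $N(0,\mathbf{\Sigma}(s^*,t^*))$, whence $\mathbf{W}_1 = \mathbf{\Sigma}^{-1}\mathbf{Z}_1 \sim N(0,\mathbf{\Sigma}^{-1})$. You simply re-derive that fact by spelling out the application of Theorem~\ref{theo-master theorem} and, in particular, the centering identity $\mathbb{Q}_n[H^{(n)}]^{\otimes 2} = \mathbb{V}_{\mathbb{Q}_n}[\stackrel{.}{\kappa}/\kappa(\mathcal{E}^{-1}(\cdot))]$ that makes the limiting covariance equal $\mathbf{\Sigma}(s^*,t^*)$ rather than a larger matrix---exactly the point the paper uses (more implicitly) inside the proof of Theorem~\ref{theo-asymptotic respresentations}.
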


\begin{proof}
This is immediate from the fact that $\mathbf{Z}_1$ in the proof of
Theorem \ref{theo-asymptotic respresentations} is a $k$-dimensional
zero-mean normal vector with covariance matrix $\mathbf{\Sigma}(s^*,t^*)$.
\end{proof}

\begin{coro}
\label{coro-asymptotics of Lambdahat}
Under the conditions of Theorem \ref{theo-asymptotic respresentations},
as $n \rightarrow \infty$, the process $W_n(s^*,\cdot) = \sqrt{n}
\left[ \hat{\Lambda}_0^{(n)}(s^*,\cdot) - \Lambda_0^0(\cdot) \right]$
converges weakly in Skorohod's $D[\mathcal{T}]$-space to a zero-mean
Gaussian process with covariance function given by
\begin{equation}
\label{covariance of limit process}
c(s^*,t_1,t_2) = \int_0^{\min(t_1,t_2)}
\frac{\Lambda_0^0(dw)}{s^{(0)}(s^*,w)} +
b(s^*,t_1)\trp \{\Sigma(s^*,t^*)\}^{-1} b(s^*,t_2),
\end{equation}
for $t_1, t_2 \in \mathcal{T}$ and with
$b(s^*,t) = \int_0^t q^{(1)}(s^*,w) \Lambda_0^0(dw)$.
\end{coro}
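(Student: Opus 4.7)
The plan is to leverage the joint convergence of $(\sqrt{n}(\hat\eta_n-\eta^0), V_n(s^*,\cdot))$ already established in Theorem \ref{theo-asymptotic respresentations} together with the identity
\[
W_n(s^*,t) = V_n(s^*,t) - \sqrt{n}(\hat{\eta}_n - \eta^0)\trp B_n(s^*,t),
\]
which is the definition of $V_n$ rearranged. Once one shows that $B_n(s^*,\cdot)$ converges uniformly in probability to the deterministic $b(s^*,\cdot)$, a Slutsky-type argument on the product Skorohod space will give the desired weak limit, and the stated covariance formula will follow by decomposing the Gaussian limit and exploiting the asymptotic independence of its two components.

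First I would establish the uniform limit $\sup_{t \in [0,t^*]}\|B_n(s^*,t) - b(s^*,t)\| \stackrel{p}{\longrightarrow} 0$. Writing $\mathbb{P}_n N(s^*,dw) = \mathbb{P}_n M(s^*,dw) + S^{(0)}(s^*,w)\Lambda_0^0(dw)$, the martingale contribution to $B_n$ is $O_p(1/\sqrt{n})$ uniformly in $t$ via Theorem \ref{theo-master theorem} applied with the integrand $I\{S^{(0)}>0\}\stackrel{.}{S}^{(0)}/[S^{(0)}]^2$. The compensator contribution simplifies, by Lemma \ref{lemma-s0}, to $\int_0^t I\{S^{(0)}>0\}\,Q_n^{(1)}(s^*,w;\eta^0,\eta^0)\,\Lambda_0^0(dw)$, which by (C2)--(C3) and dominated convergence tends uniformly in probability to $b(s^*,t) = \int_0^t q^{(1)}(s^*,w)\,\Lambda_0^0(dw)$.

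Next, Theorem \ref{theo-asymptotic respresentations} delivers the joint weak limit $(\sqrt{n}(\hat\eta_n-\eta^0), V_n(s^*,\cdot)) \Rightarrow (\mathbf{W}_1,\mathbf{W}_2)$ with $\mathbf{W}_1$ and $\mathbf{W}_2$ jointly Gaussian and asymptotically independent, the $D[\mathcal{T}]$-convergence of $V_n$ being supplied by Theorem \ref{theo-master theorem} with $H_i^{(n)}(s^*,w) = I\{S^{(0)}(s^*,w)>0\}/S^{(0)}(s^*,w)$. Combining with $B_n \to b$ uniformly via continuous mapping on the product space gives
\[
W_n(s^*,\cdot) \ \stackrel{d}{\longrightarrow}\ \mathbf{W}_2(\cdot) - \mathbf{W}_1\trp b(s^*,\cdot),
\]
which is a zero-mean Gaussian process on $[0,t^*]$. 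Tightness in Skorohod's $D[\mathcal{T}]$-space is inherited from that of $V_n$, since the subtracted term depends on $t$ only through the continuous deterministic function $b(s^*,\cdot)$ and a tight finite-dimensional vector.

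To identify the covariance, independence of $\mathbf{W}_1$ and $\mathbf{W}_2$ eliminates the cross terms, yielding
\[
c(s^*,t_1,t_2) = \mathrm{Cov}(\mathbf{W}_2(t_1),\mathbf{W}_2(t_2)) + b(s^*,t_1)\trp \mathrm{Cov}(\mathbf{W}_1)\, b(s^*,t_2).
\]
The first piece is evaluated via the $\mathbf{v}$-formula of Theorem \ref{theo-master theorem} with the integrand above, producing $\int_0^{\min(t_1,t_2)} \Lambda_0^0(dw)/s^{(0)}(s^*,w)$; the second follows from $\mathbf{W}_1 = \Sigma(s^*,t^*)^{-1}\mathbf{Z}_1$ with $\mathrm{Cov}(\mathbf{Z}_1) = \Sigma(s^*,t^*)$, giving $\mathrm{Cov}(\mathbf{W}_1) = \Sigma(s^*,t^*)^{-1}$ and reproducing exactly (\ref{covariance of limit process}). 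The principal technical obstacle I anticipate is the uniform-in-$t$ control of the remainder when replacing $\mathbb{P}_n N$ by its compensator in $B_n$: this demands the full force of Theorem \ref{theo-master theorem} applied to a $t$-indexed family of integrands, rather than a single one, but no genuinely new martingale argument is required.
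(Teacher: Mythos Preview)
Your proposal is correct and follows essentially the same route as the paper: rewrite $W_n = V_n - \sqrt{n}(\hat\eta_n-\eta^0)\trp B_n$, invoke the joint Gaussian limit and asymptotic independence from Theorem~\ref{theo-asymptotic respresentations}, show $B_n\to b$ uniformly in probability, and read off the covariance (\ref{covariance of limit process}). The paper's proof merely asserts the uniform convergence of $B_n$ as ``evident,'' whereas you supply the natural martingale-plus-compensator decomposition for it; otherwise the two arguments coincide.
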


\begin{proof}
From Theorem \ref{theo-asymptotic respresentations} we have the results that
$$\sqrt{n}(\hat{\eta}_n - \eta^0) \stackrel{d}{\rightarrow} \mathbf{W}_1(s^*,t^*)$$
where $\mathbf{W}_1(s^*,t^*) \sim N(\mathbf{0},[\mathbf{\Sigma}(s^*,t^*)]^{-1})$.
Also, we have that
$$\{V_n(s^*,t): t \in \mathcal{T}\} \Rightarrow \{{Z}_2(s^*,t): t \in \mathcal{T}\}$$
where $\{{Z}_2(s^*,t): t \in \mathcal{T}\}$ is a zero-mean Gaussian process with
covariance function
$$Cov\{Z_2(s^*,t_1),Z_2(s^*,t_2)\} = \int_0^{\min(t_1,t_2)} \frac{\Lambda_0^0(dw)}{s^{(0)}(s^*,w)}.$$
In addition, $\mathbf{W}_1(s^*,t^*)$ and $\{Z_2(s^*,t): t \in \mathcal{T}\}$ are
independent. It is also evident that
$$\sup_{t \in \mathcal{T}} ||B_n(s^*,t) - b(s^*,t)|| \stackrel{p}{\rightarrow} 0.$$
From the representations in Theorem \ref{theo-asymptotic respresentations}, it follows that
$\{W_n(s^*,t): t \in \mathcal{T}\}$ converges weakly to the process
$W_\infty \equiv \{W_\infty(s^*,t): t \in \mathcal{T}\}$
with
$$W_\infty(s^*,t) = Z_2(s^*,t) - b(s^*,t)\trp \mathbf{W}_1(s^*,t^*).$$
As such $W_\infty$ is a zero-mean Gaussian process and its covariance function is
\begin{eqnarray*}
c(s^*,t_1,t_2) & = & Cov\{W_\infty(s^*,t_1),W_\infty(s^*,t_2)\} \\
& = &
\int_0^{\min(t_1,t_2)} \frac{\Lambda_0^0(dw)}{s^{(0)}(s^*,w)} +
b(s^*,t_1)\trp [\mathbf{\Sigma}(s^*,t^*)]^{-1} b(s^*,t_2).
\end{eqnarray*}
This completes the proof of the corollary.
\end{proof}

Possible consistent estimators of the covariance functions are then easily
obtained. For the covariance matrix $\mathbf{\Sigma}(s^*,t^*)$, this could be
estimated by
\begin{eqnarray*}
\hat{\mathbf{\Sigma}}(s^*,t^*) & = &
\int_0^{t^*}
\mathbb{Q}_n(s^*,w;\hat{\eta}_n)
\left[
\frac{\stackrel{\cdot}{\kappa}}{\kappa}[\mathcal{E}^{-1}(w);\hat{\eta}_n] -
\frac{\stackrel{\cdot}{S}^{(0)}}{S^{(0)}}(s^*,w;\hat{\eta}_n)
\right]^{\otimes 2} \times \\ &&
S^{(0)}(s^*,w;\hat{\eta}_n) \hat{\Lambda}_0^{(n)}(s^*,dw;\hat{\eta}_n).
\end{eqnarray*}
For the covariance function of $Z_2(s^*,\cdot)$, a consistent estimator is
given by
\begin{displaymath}
\widehat{Cov}[Z_2(s^*,t_1),Z_2(s^*,t_2)] = \int_0^{\min(t_1,t_2)}
\frac{\hat{\Lambda}_0^{(n)}(s^*,dw)}{S^{(0)}(s^*,w;\hat{\eta}_n)}.
\end{displaymath}
On the otherhand, an estimator of $b(s^*,t)$ is given by
\begin{displaymath}
\hat{b}(s^*,t) = \int_0^t \frac{\stackrel{\cdot}{S}^{(0)}(s^*,w;\hat{\eta}_n)}
{S^{(0)}(s^*,w;\hat{\eta}_n)} \hat{\Lambda}_0^{(n)}(s^*,dw).
\end{displaymath}
From these estimators, we are then able to obtain a consistent estimator of the
covariance function $c(s^*,t_1,t_2)$ of the limiting Gaussian process $W_\infty(s^*,\cdot)$.
This estimator is
\begin{displaymath}
%%\label{estimator of covariance for Lambda-hat process}
\hat{c}(s^*,t_1,t_2) = \widehat{Cov}[Z_2(s^*,t_1),Z_2(s^*,t_2)] +
\hat{b}(s^*,t_1)\trp [\hat{\mathbf{\Sigma}}(s^*,t^*)]^{-1} \hat{b}(s^*,t_1).
\end{displaymath}

Observe that the results in Corollaries \ref{coro-asymptotics of eta-hat}
and \ref{coro-asymptotics of Lambdahat} are highly analogous to those in
\cite{AndGil82} pertaining to the estimators of the parameters of the
Cox proportional hazards model. However, one need to be cautious since under
the setting being considered, the limit functions appearing in the above results
are more complicated as they must reflect aspects of the sum-quota
accrual scheme and the dynamics of the performed interventions or repairs after
each event occurrence.

Through these asymptotic results, large-sample confidence intervals and bands,
large-sample hypothesis testing procedures, and goodness-of-fit or model validation methods for the
infinite-dimensional parameters may now be constructed for this general dynamic model for
recurrent events.
We note, however, that the results presented in this paper are still limited to the general dynamic recurrent
event model {\em without} frailties. It remains an open problem to obtain large-sample results
for the general dynamic model incorporating frailties.

%%{\bf NEED to specialize to the model considered!}

%%{\bf Testing for regression coefficient subvector.}

%%{\bf Model Validation and Residuals.}

%%\section{Semiparametric Efficiency}
%%\label{sec-efficiency}
%%
%%To be supplied.
%%
%%\section{Adequacy of Asymptotic Approximation}
%%\label{sec-adequacy}
%%
%%To be supplied.
%%
%%\section{Illustrative Examples}
%%\label{sec-illustration}
%%
%%To be supplied.

%%\section{Concluding Remarks}
%%\label{sec-conclusion}
%%
%%To be supplied.

%%\baselineskip=12pt

%%%%%%BIBLIOGRAPHY
\bibliography{JNSSubmission}

\begin{thebibliography}{10}

\bibitem{Aal78}
O.\ Aalen.
\newblock Nonparametric inference for a family of counting processes.
\newblock {\em Annals of Statistics}, 6:701--726, 1978.

\bibitem{AalBorGje08}
Odd~O. Aalen, {\O}rnulf Borgan, and H{\aa}kon~K. Gjessing.
\newblock {\em Survival and event history analysis: A process point of view}.
\newblock Statistics for Biology and Health. Springer, New York, 2008.

\bibitem{AndBorGilKei93}
P.~Andersen, O.~Borgan, R.~Gill, and N.~Keiding.
\newblock {\em Statistical Models Based on Counting Processes}.
\newblock Springer-Verlag, New York, 1993.

\bibitem{AndGil82}
P.~Andersen and R.~Gill.
\newblock Cox's regression model for counting processes: a large sample study.
\newblock {\em Annals of Statistics}, 10:1100--1120, 1982.

\bibitem{BreCro74}
N.\ Breslow and J.~Crowley.
\newblock A large sample study of the life table and product limit estimates
  under random censorship.
\newblock {\em The Annals of Statistics}, 2:437--453, 1974.

\bibitem{Cox72}
D.~Cox.
\newblock Regression models and life tables (with discussion).
\newblock {\em Journal of the Royal Statistical Society}, 34:187--220, 1972.

\bibitem{Cox75}
D.~Cox.
\newblock Partial likelihood.
\newblock {\em Biometrika}, 62:269--276, 1975.

\bibitem{FleHar91}
T.~Fleming and D.~Harrington.
\newblock {\em Counting Processes and Survival Analysis}.
\newblock Wiley, New York, 1991.

\bibitem{GilJoh89}
Richard~D. Gill and S{\o}ren Johansen.
\newblock A survey of product-integration with a view toward application in
  survival analysis.
\newblock {\em Ann. Statist.}, 18(4):1501--1555, 1990.

\bibitem{GjeRoyPenAal10}
H{\aa}kon~K. Gjessing, Kjetil R{\o}ysland, Edsel~A. Pena, and Odd~O. Aalen.
\newblock Recurrent events and the exploding {C}ox model.
\newblock {\em Lifetime Data Anal.}, 16(4):525--546, 2010.

\bibitem{PenHol04}
E.\ Pe{\~n}a and M.~Hollander.
\newblock {\em Mathematical Reliability: An Expository Perspective {\rm (eds.,
  R.\ Soyer, T.\ Mazzuchi and N.\ Singpurwalla)}}, chapter 6. Models for
  Recurrent Events in Reliability and Survival Analysis, pages 105--123.
\newblock Kluwer Academic Publishers, 2004.

\bibitem{PenStrHol01}
E.~A. Pe{\~n}a, R.~L. Strawderman, and M.~Hollander.
\newblock Nonparametric estimation with recurrent event data.
\newblock {\em J. Amer. Statist. Assoc.}, 96(456):1299--1315, 2001.

\bibitem{PenSlaGon07}
Edsel Pe{\~n}a, Elizabeth Slate, and Juan~Ramon Gonzalez.
\newblock Semiparametric inference for a general class of models for recurrent
  events.
\newblock {\em Journal of Statistical Planning and Inference}, 137:1727--1747,
  2007.

\bibitem{PenStrHol00}
Edsel~A. Pe{\~n}a, Robert~L. Strawderman, and Myles Hollander.
\newblock A weak convergence result relevant in recurrent and renewal models.
\newblock In {\em Recent advances in reliability theory (Bordeaux, 2000)},
  Stat. Ind. Technol., pages 493--514. Birkh\"auser Boston, Boston, MA, 2000.

\bibitem{PreWilPet81}
R.~Prentice, B.~Williams, and A.~Peterson.
\newblock On the regression analysis of multivariate failure time data.
\newblock {\em Biometrika}, 68:373--379, 1981.

\bibitem{Sel88}
T.~Sellke.
\newblock Weak convergence of the {A}alen estimator for a censored renewal
  process.
\newblock {\em {\rm In} Statistical Decision Theory and Related Topics IV {\rm
  (eds., {S}. {G}upta and {J}. {B}erger)}}, 2:183--194, 1988.

\bibitem{Vaa98}
A.~W. van~der Vaart.
\newblock {\em Asymptotic statistics}.
\newblock Cambridge Series in Statistical and Probabilistic Mathematics.
  Cambridge University Press, Cambridge, 1998.

\bibitem{WeiLinWei89}
L.~Wei, D.\ Lin, and L.~Weissfeld.
\newblock Regression analysis of multivariate incomplete failure time data by
  modeling marginal distributions.
\newblock {\em J.\ Amer.\ Statist.\ Assoc.}, 84:1065--1073, 1989.

\end{thebibliography}
\bibliographystyle{plain}
%\bibliography{PenaEdsel,Recurrent1,Recurrent2}
%\bibliographystyle{jasa}

\end{document}